\newtheorem{theorem}{Theorem}
\newtheorem{lemma}[theorem]{Lemma}
\theoremstyle{definition}
\newtheorem{remark}{Remark}
\def\pp{\mathbb{P}}
\def\nn{\mathbb{N}}
\def\zz{\mathbb{Z}}
\def\cc{\mathbb{C}}
\def\qq{\mathbb{Q}}
\def\gb{\mathfrak{B}}
\def\d{{\rm d}}
\def\meas{{\rm meas}}
\def\st{{\widetilde{S}}}
\markboth{\today}{\today}
\begin{document}
\hfill\texttt{\jobname.tex}\qquad\today

\bigskip
\title[Mixed joint discrete universality]
{On mixed joint discrete universality \\ for a class of zeta-functions}

\author{Roma Ka{\v c}inskait{\.e}}

\address{R. Ka{\v c}inskait{\.e} \\
Department of Mathematics and Statistics, Vytautas Magnus University, Kaunas, Vileikos 8, LT-44404, Lithuania\\
Department of Informatics and Ma\-the\-ma\-tics, {\v S}iauliai University, Vilniaus~141, LT-76353 {\v S}iauliai, Lithuania}
\email{r.kacinskaite@if.vdu.lt, r.kacinskaite@fm.su.lt}

\author{Kohji Matsumoto}

\address{K. Matsumoto, Graduate School of Mathematics, Nagoya University, Chikusa-ku,
Nagoya 464-8602, Japan}
\email{kohjimat@math.nagoya-u.ac.jp}
\date{}

\maketitle

\begin{abstract}
We prove a mixed joint discrete universality theorem for a Matsumoto zeta-function $\varphi(s)$
(belonging to the Steuding subclass) and a
periodic Hurwitz zeta-function $\zeta(s,\alpha;\gb)$.
For this purpose, certain independence condition for the parameter $\alpha$ and the minimal step
of discrete shifts of these functions is assumed.
This paper is a continuation of authors' works \cite{RK-KM-15} and \cite{RK-KM-bams}.
\end{abstract}

{\small{Keywords: {discrete shift, joint approximation, linear independence, periodic  Hurwitz zeta-function, Matsumoto zeta-function, universality.}}}

{\small{AMS classification:} 11M06, 11M41, 11M36.}

\section{Introduction}\label{sect-1}

In analytic number theory, the problem of so-called mixed joint universality in Vo\-ro\-nin's sense is a very interesting problem since it solves a problem on simultaneous approximation of certain tuples of analytic functions by shifts of tuples consisting of zeta-functions having an Euler product expansion over the set of primes and other zeta-functions without such product. For such type of universality, a very important role is played by the parameters which occur in the definitions of zeta-functions.

The first result on mixed joint universality was obtained by H.~Mishou in \cite{HM-07}. He proved that the Riemann zeta-function $\zeta(s)$ and the Hurwitz zeta-function $\zeta(s,\alpha)$ with transcendental parameter $\alpha$ are jointly universal.

Let $\mathbb{P}$, $\mathbb{N}$, $\mathbb{N}_0$, $\mathbb{Z}$, $\mathbb{Q}$, $\mathbb{R}$ and $\mathbb{C}$ be the sets of all primes, positive integers, non-negative integers, integers, rational numbers, real numbers and complex numbers, respectively. Denote by $s=\sigma+it$ a complex variable.
Recall that the functions $\zeta(s)$ and $\zeta(s,\alpha)$, $0<\alpha \leq 1$, for $\sigma>1$, are defined by
$$
\zeta(s)=\sum_{m=1}^{\infty}\frac{1}{m^s}=\prod_{p \in \pp}\bigg(1-\frac{1}{p^s}\bigg)^{-1}\quad \text{and}\quad
\zeta(s,\alpha)=\sum_{m=0}^{\infty}\frac{1}{(m+\alpha)^s},
$$
respectively.
Both of them are analytically continued to the whole complex plane, except for a simple pole at the point $s=1$ with residue 1. Note that the Riemann zeta-function has the Euler product expansion, while in general the Hurwitz zeta-function does not have (except the cases $\alpha=\frac{1}{2}, \, 1$).

For Mishou's result and further statements, we introduce some notations.
Let $D(a,b)=\{s\in\cc:a<\sigma<b\}$ for any $a<b$.
For every compact set $K\subset\cc$, denote by $H^c(K)$ the set of all
$\cc$-valued continuous functions defined on $K$ and holomorphic in the interior of
$K$. By $H_0^c(K)$ we denote  the subset of $H^c(K)$, consisting of all elements which are
non-vanishing on $K$.

\begin{theorem}[\cite{HM-07}]\label{Mishou-2007}
Suppose that $\alpha$ is a transcendental number. Let $K_1$ and $K_2$ be compact subsets of the strip $D(\frac{1}{2},1)$ with connected complements. Suppose that $f_1(s) \in H_0^c(K_1)$ and $f_2(s) \in H^c(K_2)$. Then, for every $\varepsilon>0$,
$$
\liminf\limits_{T\to \infty}\frac{1}{T}{\meas}\bigg\{\tau \in [0,T]: \sup\limits_{s \in K_1}|\zeta(s+i\tau)-f_1(s)|<\varepsilon,\sup\limits_{s \in K_2}|\zeta(s+i\tau,\alpha)-f_2(s)|<\varepsilon\bigg\}>0.
$$
\end{theorem}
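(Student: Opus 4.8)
The plan is to follow the probabilistic method of Bagchi, adapted by Mishou to the mixed setting. First I would fix the strip $D=D(\frac{1}{2},1)$ and regard the pair of functions as a random element of $H(D)\times H(D)$, where $H(D)$ carries the topology of uniform convergence on compacta. The central object is the family of probability measures
$$
P_T(A)=\frac{1}{T}\meas\Big\{\tau\in[0,T]:\big(\zeta(s+i\tau),\zeta(s+i\tau,\alpha)\big)\in A\Big\},
$$
defined on the Borel sets of $H(D)\times H(D)$. The goal is to show that $P_T$ converges weakly to an explicitly describable limit measure $P$ as $T\to\infty$, and then to identify the support of $P$.

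For the weak convergence I would introduce the infinite-dimensional torus $\Omega=\Omega_1\times\Omega_2$, where $\Omega_1=\prod_{p\in\pp}\gamma_p$ indexes the Euler factors of $\zeta(s)$ and $\Omega_2=\prod_{m=0}^{\infty}\gamma_m$ indexes the summands of $\zeta(s,\alpha)$, each $\gamma$ being the unit circle. With the product Haar measure, $\Omega$ is a compact abelian group, and $\tau\mapsto\big((p^{-i\tau})_p,((m+\alpha)^{-i\tau})_m\big)$ defines a measurable flow on $\Omega$. The decisive input is that this flow is \emph{ergodic}, which follows from the linear independence over $\qq$ of the system $\{\log p:p\in\pp\}\cup\{\log(m+\alpha):m\in\nn_0\}$; this is exactly where the transcendence of $\alpha$ enters. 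Character computations on $\Omega$ together with the classical ergodic theorem then give a limit theorem on $\Omega$, which I transport to $H(D)\times H(D)$ via the maps sending a torus point to the randomized series $\zeta(s,\omega_1)$ and $\zeta(s,\alpha,\omega_2)$; an approximation-in-mean of the smoothly truncated series controls the tails and yields that $P_T$ converges weakly to the distribution $P$ of $\big(\zeta(s,\omega_1),\zeta(s,\alpha,\omega_2)\big)$.

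The hardest and most delicate step is the \emph{identification of the support} of $P$. Here the presence and absence of an Euler product produce an asymmetry: I expect the support to equal $\st\times H(D)$, where $\st$ is the set of $g\in H(D)$ that are either non-vanishing or identically zero. The non-vanishing constraint on the first coordinate is precisely why the theorem requires $f_1\in H_0^c(K_1)$ while imposing no such condition on $f_2$. Establishing this requires showing the randomized series are dense enough in the relevant function spaces; the engine is a rearrangement theorem for conditionally convergent series in a Hilbert space (in the spirit of Pecherskii), applied to the logarithmic series of $\zeta(s,\omega_1)$ and directly to $\zeta(s,\alpha,\omega_2)$, again leaning on the $\qq$-linear independence guaranteed by the transcendence of $\alpha$, which also makes the support a genuine product.

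Finally, with the support in hand the theorem follows quickly. Since $K_1,K_2$ have connected complements and $f_1$ is non-vanishing, Mergelyan's theorem lets me write $f_1=e^{g_1}$ and approximate $(f_1,f_2)$ uniformly on $K_1\times K_2$ by elements of the support. The set
$$
G=\Big\{(g_1,g_2):\sup_{s\in K_1}|g_1(s)-f_1(s)|<\varepsilon,\ \sup_{s\in K_2}|g_2(s)-f_2(s)|<\varepsilon\Big\}
$$
is open and, by the support characterization, satisfies $P(G)>0$. Because $G$ is open, the weak convergence $P_T\Rightarrow P$ gives $\liminf_{T\to\infty}P_T(G)\ge P(G)>0$, which is exactly the claimed lower-density estimate.
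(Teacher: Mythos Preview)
The paper does not give its own proof of this statement: Theorem~\ref{Mishou-2007} is simply quoted from \cite{HM-07} as historical background, so there is no proof in the paper to compare against. That said, your outline is a correct sketch of Mishou's argument, and it matches almost verbatim the scheme the present paper carries out for its \emph{discrete} analogue in Sections~\ref{sect-2}--\ref{sect-4}: a limit theorem on the torus $\Omega=\Omega_1\times\Omega_2$ driven by the $\qq$-linear independence of $\{\log p:p\in\pp\}\cup\{\log(m+\alpha):m\in\nn_0\}$ (secured by the transcendence of $\alpha$), transport to $H(D_1)\times H(D_2)$ via smoothed approximating Dirichlet series and a mean-square argument, identification of the support as $S_{\varphi}\times H(D_T)$, and a Mergelyan finish.

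Two cosmetic remarks. First, in this paper the macro $\st$ already denotes the Steuding class $\widetilde{S}$, not the set of non-vanishing-or-identically-zero functions; the paper writes $S_{\varphi}$ for the latter. Second, in your final paragraph the symbol $g_1$ is used with two different meanings (a logarithm of $f_1$ and a coordinate of the open set $G$); the paper avoids this by centering $G$ at $(e^{p_1(s)},p_2(s))$ rather than at $(f_1,f_2)$, which also makes the ``point in the support'' step slightly more transparent. Neither issue affects the correctness of your outline.
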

\noindent Here, as usual, $\meas\{A\}$ denotes the Lebesgue measure of the measurable set $A \subset {\mathbb{R}}$.

Note that J. Sander and J. Steuding \cite{JS-JS-06} proved the same type of universality, but for
rational $\alpha$, by a quite different method.

In \cite{RK-KM-15}, we consider the mixed joint universality property for a wide  class of zeta-functions consisting of Matsumoto zeta-functions $\varphi(s)$ belonging to the Steuding class
${\st}$ and periodic Hurwitz zeta-functions $\zeta(s,\alpha;\gb)$.

Recall the definition of the polynomial Euler products ${\widetilde{\varphi}}(s)$ or so-called Matsumoto zeta-functions.   (Remark: The function ${\widetilde \varphi}(s)$ was introduced by the second author in \cite{KM-90}.)
For $m \in \nn$, let $g(m)$ be a positive integer and $p_m$ the $m$th prime number. Moreover, let $a_m^{(j)}\in \cc$, and $f(j,m)\in \mathbb{N}$, $1\leq j \leq g(m)$.
The function ${\widetilde{\varphi}}(s)$ is defined by the polynomial Euler product
\begin{equation}\label{rk-eq-1}
{\widetilde\varphi}(s)=\prod_{m=1}^{\infty}\prod_{j=1}^{g(m)}\left(1-a_m^{(j)}p_m^{-sf(j,m)}\right)^{-1}.
\end{equation}
It is assumed that
\begin{equation}\label{rk-eq-2}
g(m)\leq C_1p_m^\alpha \quad \text{and} \quad  |a_m^{(j)}|\leq p_m^\beta
\end{equation}
with a positive constant $C_1$ and non-negative constants $\alpha$ and $\beta$. In view of \eqref{rk-eq-2},
the function $\widetilde{\varphi}(s)$ converges absolutely for $\sigma>\alpha+\beta+1$, and hence, in this region, it can be given by the absolutely convergent Dirichlet series
\begin{align}\label{rk-eq-2.5}
{\widetilde \varphi}(s)=\sum_{k=1}^{\infty}\frac{{\widetilde c}_k}{k^s}.
\end{align}
The shifted function ${\varphi}(s)$ is given by
\begin{equation}\label{rk-eq-3}
\varphi(s)=\sum_{k=1}^{\infty}\frac{{\widetilde c}_k}{k^{s+\alpha+\beta}}=\sum_{k=1}^{\infty}\frac{c_k}{k^s}
\end{equation}
with $c_k=k^{-\alpha-\beta}{\widetilde c}_k$.
For $\sigma>1$, the last series in \eqref{rk-eq-3} converges absolutely too.

Also, suppose that, for the function $\varphi(s)$, the following assumptions hold (for the
details, see \cite{KM-90}):
\begin{itemize}
  \item[(a)] $\varphi(s)$ can be continued meromorphically to
$\sigma\geq\sigma_0$, where $\frac{1}{2}\leq\sigma_0<1$, and all poles in this
region are included in a compact set which has no intersection with the line $\sigma=\sigma_0$,
  \item[(b)] $\varphi(\sigma+it)=O(|t|^{C_2})$ for $\sigma\geq\sigma_0$, where $C_2$ is a positive constant,
  \item[(c)] the mean-value estimate
\begin{equation}\label{rk-eq-2-5}
\int_0^T|\varphi(\sigma_0+it)|^2 dt=O(T).
\end{equation}
\end{itemize}

It is possible to discuss functional limit theorems for Matsumoto zeta-functions (see Section
\ref{sect-2} below), but this framework is too wide to consider the universality property.
To investigate the universality, we introduce the Steuding subclass $\st$, for which the following
slightly more restrictive conditions are required. We say that the function $\varphi(s)$ belongs to the class $\st$, if the following conditions are fulfilled:
\begin{itemize}
  \item[(i)] there exists a Dirichlet series expansion
  $$
  \varphi(s)=\sum_{m=1}^{\infty}\frac{a(m)}{m^s}
  $$
  with $a(m)=O(m^\varepsilon)$ for every $\varepsilon>0$;
  \item[(ii)] there exists $\sigma_\varphi<1$ such that $\varphi(s)$ can be meromorphically continued to the half-plane $\sigma>\sigma_\varphi$, and holomorphic except for at most a pole at $s=1$;
  \item[(iii)] there exists a constant $c \geq 0$ such that
  $$
  \varphi(\sigma+it)=O(|t|^{c+\varepsilon})
  $$
  for every fixed $\sigma>\sigma_\varphi$ and $\varepsilon>0$;
  \item[(iv)] there exists the Euler product expansion over prime numbers, i.e.,
  $$
  \varphi(s)=\prod_{p \in \mathbb{P}}\prod_{j=1}^{l}\left(1-\frac{a_j(p)}{p^s}\right)^{-1};
  $$
  \item[(v)] there exists a constant $\kappa>0$ such that
  $$
  \lim_{x \to \infty}\frac{1}{\pi(x)}\sum_{p \leq x}|a(p)|^2=\kappa,
  $$
  where $\pi(x)$ denotes the number of primes up to $x$, i.e.,  $p \leq x$.
\end{itemize}

For $\varphi\in{\widetilde S}$, let $\sigma^*$ be the infimum of all $\sigma_1$ for which
$$
\frac{1}{2T}\int_{-T}^T |\varphi(\sigma+it)|^2 dt\sim\sum_{m=1}^{\infty}
\frac{|a(m)|^2}{m^{2\sigma}}
$$
holds for every $\sigma\geq\sigma_1$. Then it is known that $\frac{1}{2}\leq\sigma^*<1$. (Remark: The class $\st$ was introduced by J.~Steuding in  \cite{JSt-07}.)

Now we recall the definition of the periodic Hurwitz zeta-function $\zeta(s,\alpha;\gb)$ with a fixed parameter $\alpha$, $0<\alpha\leq 1$.  (Remark: The function $\zeta(s,\alpha;\gb)$ was introduced by A.~Javtokas and A.~Laurin\v cikas in \cite{AJ-AL-06}.)   Let ${\mathfrak{B}}=\{b_m: m \in \nn_0\}$ be a periodic sequence of complex numbers (not all zero) with minimal period $k \in \nn$.
For $\sigma>1$, the function $\zeta(s,\alpha;\gb)$ is defined by
$$
\zeta(s,\alpha;\gb)=\sum_{m=0}^{\infty}\frac{b_m}{(m+\alpha)^s}.
$$
It is known that
\begin{eqnarray}\label{rk-eq-4*}
\zeta(s,\alpha;\gb)=\frac{1}{k^s}\sum_{l=0}^{k-1}b_l\zeta\bigg(s,\frac{l+\alpha}{k}\bigg), \quad \sigma>1.
\end{eqnarray}
The last equality gives an analytic continuation of the function $\zeta(s,\alpha;\gb)$ to the whole complex plane, except for a possible simple pole at the point $s=1$ with residue
$$
b:=\frac{1}{k}\sum_{l=0}^{k-1}b_l.
$$
If $b=0$, then $\zeta(s,\alpha;\gb)$  is an entire function.

In \cite{RK-KM-15}, we prove the mixed joint universality property of the functions $\varphi(s)$ and $\zeta(s,\alpha;\gb)$.

\begin{theorem}[\cite{RK-KM-15}]\label{rk-th-1}
Suppose that $\varphi(s)\in {\widetilde S}$, and $\alpha$ is a transcendental number. Let
$K_1$ be a compact subset of $D(\sigma^*,1)$, and $K_2$ be a compact subset of
$D(\frac{1}{2},1)$, both with connected complements. Suppose that $f_1\in H_0^c(K_1)$
and $f_2\in H^c(K_2)$.   Then, for every $\varepsilon>0$,
\begin{align*}
\liminf\limits_{T \to \infty}\frac{1}{T}\meas\bigg\{\tau\in [0,T]: &\; \sup\limits_{s \in K_1}|\varphi(s+i\tau)-f_1(s)|<\varepsilon, \\ &\; \sup\limits_{s\in K_2}|\zeta(s+i\tau,\alpha;\gb)-f_2(s)|<\varepsilon\bigg\}>0.
\end{align*}
\end{theorem}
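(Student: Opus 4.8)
The plan is to deduce the theorem from a joint functional limit theorem, together with an explicit description of the support of the limiting measure and Mergelyan's approximation theorem, following the probabilistic approach of Bagchi and Voronin. Throughout I work in the space $H(D_1)\times H(D_2)$ of pairs of functions analytic on $D_1=D(\sigma^*,1)$ and $D_2=D(\frac12,1)$ respectively, equipped with the topology of uniform convergence on compact subsets.

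First I set up the probability space. With $\gamma=\{z\in\cc:|z|=1\}$, form the tori $\Omega_1=\prod_{p\in\pp}\gamma$ and $\Omega_2=\prod_{m\in\nn_0}\gamma$, and let $\Omega=\Omega_1\times\Omega_2$ carry the product Haar probability measure $m_H$. The crucial arithmetic input is that the transcendence of $\alpha$ makes the set $\{\log p:p\in\pp\}\cup\{\log(m+\alpha):m\in\nn_0\}$ linearly independent over $\qq$: any rational relation would give $\prod_p p^{a_p}\prod_m(m+\alpha)^{b_m}=1$, and transcendence of $\alpha$ forces all $b_m=0$, after which unique factorization forces all $a_p=0$. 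By the Weyl criterion this yields that $\tau\mapsto\big((p^{-i\tau})_{p\in\pp},((m+\alpha)^{-i\tau})_{m\in\nn_0}\big)$ is uniformly distributed in $\Omega$.

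Next I would prove the limit theorem. Define on $\Omega$ the random elements $\varphi(s,\omega_1)=\prod_{p}\prod_{j=1}^{l}(1-a_j(p)\omega_1(p)p^{-s})^{-1}$ and $\zeta(s,\alpha,\omega_2;\gb)=\sum_{m\in\nn_0}b_m\omega_2(m)(m+\alpha)^{-s}$. Combining the uniform distribution above (which gives convergence of the finite-dimensional distributions) with tightness of the family $P_T(A)=\frac{1}{T}\meas\{\tau\in[0,T]:(\varphi(\cdot+i\tau),\zeta(\cdot+i\tau,\alpha;\gb))\in A\}$ — the latter following from the Steuding condition (iii) and the definition of $\sigma^*$ for the $\varphi$-component, and from the classical mean-square bound for the periodic Hurwitz zeta-function for the second component — I obtain that $P_T$ converges weakly as $T\to\infty$ to the distribution $P$ of the pair $(\varphi(\cdot,\omega_1),\zeta(\cdot,\alpha,\omega_2;\gb))$. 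The $\qq$-linear independence is exactly what makes the two marginal random elements independent, so that $P$ factors as a product measure. I would then identify its support as $S_\varphi\times H(D_2)$, where $S_\varphi=\{g\in H(D_1):g\equiv 0\ \text{or}\ g(s)\neq 0\ \text{for all}\ s\}$; the first factor comes from the Euler product of $\varphi\in\st$ via a Pechersky-type denseness argument applied to the rearranged logarithmic series, while the second is the whole space because $\zeta(s,\alpha;\gb)$ carries no Euler product.

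Finally I conclude by approximation. Since $f_1\in H_0^c(K_1)$ is non-vanishing I write $f_1=e^{g_1}$ and use Mergelyan's theorem to choose a polynomial $p_1$ with $e^{p_1}$ close to $f_1$ on $K_1$; then $e^{p_1}\in S_\varphi$. Mergelyan's theorem also gives a polynomial $p_2$ close to $f_2$ on $K_2$, and trivially $p_2\in H(D_2)$. Hence $(e^{p_1},p_2)$ lies in the support of $P$, so every neighbourhood of it has positive $P$-measure, and the open-set part of the Portmanteau theorem turns this into a positive $\liminf$ for $P_T$ of the corresponding $\varepsilon$-neighbourhood, which is the assertion once the approximation errors are absorbed. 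The main obstacle is the limit theorem together with the support computation: establishing tightness for $\varphi$ (which is what forces the restriction $K_1\subset D(\sigma^*,1)$) and, above all, carrying out the two denseness arguments that identify each marginal support and combining them through the independence supplied by the $\qq$-linear independence of the full set of logarithms.
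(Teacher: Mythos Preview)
This theorem is not proved in the present paper; it is quoted from the authors' earlier work \cite{RK-KM-15}. That said, the paper does prove its discrete analogue (Theorem~\ref{rk-th-2}), and your outline matches that proof---and, by implication, the argument of \cite{RK-KM-15}---in overall architecture: a functional limit theorem for $(\varphi(s+i\tau),\zeta(s+i\tau,\alpha;\gb))$ on a product of $H$-spaces with explicit limit $P_{\underline Z}$, identification of the support of $P_{\underline Z}$ as $S_\varphi\times H(D_2)$, and then Mergelyan together with the Portmanteau inequality for open sets.

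Two points where your sketch diverges from the paper's treatment of the discrete case (and presumably from \cite{RK-KM-15}). First, the limit theorem is not obtained there by ``finite-dimensional convergence plus tightness'' directly; instead one introduces absolutely convergent mollified series $\varphi_n$, $\zeta_n$, proves the limit theorem for these (where the map from the torus to $\underline H$ is continuous, so the torus result transfers immediately), and then passes from $\underline Z_n$ to $\underline Z$ via a mean-square approximation lemma based on integral representations and the mean-value estimate defining $\sigma^*$. Your appeal to tightness from condition~(iii) is plausible shorthand but hides exactly this work. Second, the paper carries out the support computation on bounded open rectangles $D_M\subset D(\sigma^*,1)$ and $D_T\subset D(\frac12,1)$ containing $K_1$ and $K_2$, rather than on the full vertical strips; this sidesteps the possible pole of $\varphi$ at $s=1$, which would otherwise lie on the boundary of your $D_1$ and complicate the statement that $\varphi(\cdot+i\tau)\in H(D_1)$.
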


In \cite{RK-KM-bams}, we obtain a generalization of Theorem \ref{rk-th-1}, in which several
periodic Hurwitz zeta-functions are involved.

More interesting and convenient in practical applications is so-called discrete universality of zeta-functions (for example, see \cite{KB-NK-HR-91}). This pushes us to extend our investigations of mixed joint universality for a class of zeta-functions to the discrete case. Recall that, in this case, the pair of analytic functions is approximated by discrete shifts of tuple $\big(\varphi(s+ikh), \zeta(s+ikh,\alpha;\gb)\big)$, $k \in \nn_0$, where $h>0$ is the minimal step of given arithmetical progression.

The aim of this paper is to prove a mixed joint discrete universality theorem for the collection of the functions $(\varphi(s), \zeta(s,\alpha;\gb))$, i.e., the discrete version of Theorem~\ref{rk-th-1}.

For $h>0$, let
$$
L(\pp,\alpha,h)=\bigg\{(\log p: p \in \pp), (\log(m+\alpha): m \in \nn_0),\frac{2 \pi}{h}\bigg\}.
$$

\begin{theorem}\label{rk-th-2}
Let $\varphi(s)\in {\widetilde S}$, $K_1$, $K_2$, $f_1(s)$ and $f_2(s)$ satisfy the conditions as in Theorem~\ref{rk-th-1}. Suppose that the set $L(\pp,\alpha,h)$ is linearly independent over $\qq$.
Then, for every $\varepsilon>0$,
\begin{eqnarray*}
\liminf\limits_{N \to \infty}
\frac{1}{N+1}
\#
\bigg\{0\leq k \leq N:
 && \sup\limits_{s \in K_1}|\varphi(s+ikh)-f_1(s)|<\varepsilon, \\ &&  \sup\limits_{s\in K_2}|\zeta(s+ikh,\alpha;\gb)-f_2(s)|<\varepsilon\bigg\}>0.
\end{eqnarray*}
\end{theorem}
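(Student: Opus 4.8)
The plan is to follow the probabilistic (Bagchi--Laurin\v cikas) method underlying Theorem~\ref{rk-th-1}, replacing the continuous averaging by a discrete one. Write $D_1=D(\sigma^*,1)$ and $D_2=D(\tfrac{1}{2},1)$, and let $H(D_j)$ denote the space of holomorphic functions on $D_j$ with the topology of uniform convergence on compacta. On $H(D_1)\times H(D_2)$ consider the discrete probability measures
$$
P_N(A)=\frac{1}{N+1}\#\Big\{0\le k\le N:\big(\varphi(s+ikh),\,\zeta(s+ikh,\alpha;\gb)\big)\in A\Big\},
$$
for Borel sets $A$. The core of the argument is a \emph{discrete limit theorem}: I would show that $P_N$ converges weakly, as $N\to\infty$, to a measure $P$ independent of $f_1,f_2$, and then identify its support.

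To build $P$, introduce the compact abelian group $\Omega=\Omega_1\times\Omega_2$, where $\Omega_1=\prod_{p\in\pp}\gamma$ and $\Omega_2=\prod_{m\in\nn_0}\gamma$ with $\gamma=\{s\in\cc:|s|=1\}$, carrying the probability Haar measure $m_H$. The key step is to prove that the sequence
$$
\omega_k=\Big(\big(p^{-ikh}\big)_{p\in\pp},\,\big((m+\alpha)^{-ikh}\big)_{m\in\nn_0}\Big)\in\Omega,\qquad k=0,1,2,\dots,
$$
is uniformly distributed in $\Omega$. By the Weyl criterion together with Pontryagin duality it suffices to check that, for every nontrivial character of $\Omega$ --- given by integers $(l_p)$, $(l_m)$, all but finitely many zero and not all zero --- the Weyl sum
$$
\frac{1}{N+1}\sum_{k=0}^{N}\exp\!\left(-ikh\Big(\sum_{p}l_p\log p+\sum_{m}l_m\log(m+\alpha)\Big)\right)
$$
tends to $0$ as $N\to\infty$. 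Being a geometric sum, it vanishes in the limit precisely when $h\big(\sum_p l_p\log p+\sum_m l_m\log(m+\alpha)\big)\notin 2\pi\zz$, which is exactly the linear independence of $L(\pp,\alpha,h)$ over $\qq$. This is the decisive point where the hypothesis on $h$, $\alpha$ and the primes enters, and it is the main obstacle distinguishing the discrete case from the already established continuous Theorem~\ref{rk-th-1}.

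With uniform distribution in hand, I would transfer it from $\Omega$ to $H(D_1)\times H(D_2)$ as in the continuous theory. Define on $\Omega$ the random element $g(\omega)=\big(\varphi(s,\omega_1),\,\zeta(s,\alpha,\omega_2;\gb)\big)$, where
$$
\varphi(s,\omega_1)=\prod_{p\in\pp}\prod_{j=1}^{l}\Big(1-\frac{a_j(p)\,\omega_1(p)}{p^s}\Big)^{-1},\qquad
\zeta(s,\alpha,\omega_2;\gb)=\sum_{m=0}^{\infty}\frac{b_m\,\omega_2(m)}{(m+\alpha)^s},
$$
the products and series being taken in the relevant half-planes and then continued. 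Using truncation together with the mean-square information for $\varphi$ (the definition of $\sigma^*$ and conditions (i)--(iii)) and the classical mean-square bounds for the periodic Hurwitz zeta-function, the map $g$ is measurable and the uniform distribution of $(\omega_k)$ pushes through the resulting continuous maps; this yields $P_N\Rightarrow P:=m_H\circ g^{-1}$.

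Finally I would identify the support of $P$ and conclude. Since $\Omega=\Omega_1\times\Omega_2$ and $m_H$ is the product Haar measure, $P$ is a product measure and its support splits as $S_1\times S_2$. For the factor coming from $\varphi$, conditions (iv) and (v) defining $\st$ together with the value-distribution arguments for the Steuding class give $S_1=\{g\in H(D_1):g\equiv 0\ \text{or}\ g(s)\ne 0\ \text{on}\ D_1\}$; for the periodic Hurwitz factor, the sequence $\gb$ being not identically zero gives $S_2=H(D_2)$. As $f_1\in H_0^c(K_1)$, Mergelyan's theorem furnishes a polynomial $q$ with $e^{q}$ close to $f_1$ on $K_1$, so $f_1$ is approximable by an element of $S_1$; likewise $f_2\in H^c(K_2)$ is approximable by an element of $S_2$. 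Hence the open set of pairs lying within $\varepsilon$ of $(f_1,f_2)$ on $K_1$ and $K_2$ meets $\operatorname{supp}P$ and therefore has positive $P$-measure. A standard open-set (portmanteau) argument applied to $P_N\Rightarrow P$ then converts this into the asserted positive lower density, completing the proof.
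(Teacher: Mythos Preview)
Your proposal is correct and follows essentially the same route as the paper: the discrete limit theorem on $\Omega$ via characters (the paper's Lemma~\ref{rk-lem-1}), its transfer to $\underline{H}$ through truncation and mean-square approximation (Lemmas~\ref{rk-lem-2}--\ref{rk-lem-4} and Theorem~\ref{rk-th-3}), identification of the support (Theorem~\ref{rk-th-4}), and the Mergelyan/portmanteau finish. The only cosmetic differences are that the paper works on bounded open rectangles $D_M\subset D(\sigma^*,1)$ and $D_T\subset D(\tfrac12,1)$ rather than the full strips, and makes the ergodicity step (Birkhoff--Khintchine) explicit when identifying the limit measure with $P_{\underline{Z}}$.
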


\begin{remark}\label{rem-1}
A typical situation when $L(\pp,\alpha,h)$ is linearly independent is the case when $\alpha$ and
$\exp\big\{\frac{2\pi}{h}\big\}$ are algebraically independent over $\qq$.   The proof of this fact is
given in \cite{EB-AL-15rj}.
\end{remark}

Now recall some known facts of discrete universality which directly connect with objects under our interests.

Discrete universality property for the Matsumoto zeta-function under the
condition that $\exp \{\frac{2 \pi k}{h}\}$ is irrational for every non-zero integer $k$ was
obtained by the first author in \cite{RK-02}. While the discrete universality of the periodic Hurwitz zeta-functions was proved by A.~Laurin\v cikas and R.~Macaitien\.e in \cite{AL-RM-09}.

Also, some results on discrete analogue of mixed universality are known. The first attempt in this direction was done by the first author in \cite{RK-09}, under the assumption that $\alpha$ is transcendental and $\exp\big\{\frac{2\pi}{h}\big\}$ is rational.    Unfortunately the proof in \cite{RK-09} is incomplete, as mentioned by A.~Laurin\v cikas in 2014 (see \cite{EB-AL-15lmj}).
However the argument in \cite{RK-09} gives a correct proof for the modified $L$-functions where
all Euler factors corresponding to primes in the set of all prime numbers appearing as a prime factor of $a$ or $b$ such that  $\frac{a}{b}=\exp\big\{\frac{2 \pi}{h}\big\} \in \qq$, $a,b \in \zz$, $(a,b)=1$, are removed; see Section \ref{sect-5}.

In \cite{EB-AL-15rj} and \cite{EB-AL-15lmj}, E.~Buivydas and A.~Laurin\v cikas proved the joint mixed discrete universality for the Riemann zeta-function  $\zeta(s)$ and Hurwitz zeta-function $\zeta(s,\alpha)$. The first result \cite{EB-AL-15rj} deals with the case when the mi\-ni\-mal steps of
arithmetical progressions $h$ for both functions are common, while in the second paper \cite{EB-AL-15lmj}, for $\zeta
(s)$ and $\zeta(s,\alpha)$, the mi\-ni\-mal steps $h_1$ and $h_2$ are different from each other.

It is the purpose of the present paper to give the proof of
the joint mixed discrete universality theorem (Theorem \ref{rk-th-2}) for
$(\varphi(s),\zeta(s,\alpha;\gb))$, which
generalizes the result from \cite{EB-AL-15rj}, and to clarify the situation in \cite{RK-09}.

\section{A joint mixed discrete limit theorem}\label{sect-2}

The proof of Theorem~\ref{rk-th-2} is based on a joint mixed discrete limit theorem in the sense of weakly convergent probability
measures in the space of analytic functions for the Matsumoto zeta-functions $\varphi(s)$ and the periodic Hurwitz zeta-function $\zeta(s,\alpha;{\mathfrak{B}})$ (for the detailed
expositions of this method, see \cite{AL-96}, \cite{JSt-07}), which we prove in this section,
using the linear independence of the set $L(\pp,\alpha,h)$.
In this section, $\varphi(s)$ denotes any general Matsumoto zeta-function.

For further statements, we start with some notations and definitions.

For a set $S$, denote by $\mathcal{B}(S)$ the set of all Borel subset of $S$.
Let $\gamma=\{s \in \cc: |s|=1\}$. Define
$$
\Omega_1=\prod_{p\in\pp} \gamma_p \quad\text{and} \quad \Omega_2=\prod_{m=0}^{\infty}\gamma_m,
$$
where $\gamma_p=\gamma$ for all $p\in\pp$, and $\gamma_m=\gamma$ for all $m \in \nn_0$.
By the Tikhonov theorem (see \cite{JLK-55}), the tori $\Omega_1$ and $\Omega_2$ with the product topology and the pointwise multiplication are compact topological groups. Then
$$
\Omega:=\Omega_1 \times \Omega_2
$$
is a compact topological Abelian group too, and we obtain the probability space $(\Omega, {\mathcal{B}}(\Omega),$ $m_H)$. Here $m_H=m_{1H}\times m_{2H}$ with the probability Haar measures $m_{1H}$ and $m_{2H}$ defined on the spaces  $(\Omega_1,{\mathcal B}(\Omega_1))$ and $(\Omega_2,{\mathcal B}(\Omega_2))$, respectively.

Let $\omega_1(p)$ stand for the projection of
$\omega_1\in\Omega_1$ to the coordinate space $\gamma_p$, $p \in \pp$, and, for every $m\in\nn$, we put
$$
\omega_1(m)=\prod_{j=1}^r \omega_1(p_j)^{l_j},
$$
where, by factorizing of $m$ into the primes, $m=p_1^{l_1}\cdots p_r^{l_r}$. Let $\omega_2(m)$ denotes the projection
of $\omega_2\in\Omega_2$ to the coordinate space $\gamma_m$, $m \in \nn_0$. Define $\omega=(\omega_1,\omega_2)$ for elements of $\Omega$.
For any open subregion $G$ in the complex plane,
let $H(G)$ be the space of analytic functions on $G$ equipped with the topology of uniform convergence in compacta.


The function $\varphi(s)$ has only finitely many poles by the condition (a). Denote those
poles by $s_1(\varphi),\ldots,s_l(\varphi)$, and define
$$
D_{\varphi}=\{s: \;\sigma>\sigma_0,\; \sigma\neq\Re s_j(\varphi), \;1\leq j\leq l\}.
$$
Then $\varphi(s)$ and its vertical shift $\varphi(s+ikh)$ are holomorphic in
$D_{\varphi}$.
The function $\zeta(s,\alpha;\gb)$ can be written as a linear combination of
Hurwitz zeta-functions \eqref{rk-eq-4*}, therefore it is entire, or has a simple pole at $s=1$.
Therefore $\zeta(s,\alpha;\gb)$ and its vertical shift $\zeta(s+ikh,\alpha;\gb)$
are holomorphic in
$$
D_{\zeta}=
\begin{cases}
\big\{s \in \cc: \; \sigma>\frac{1}{2}\big\} & \text{if}\quad \zeta(s,\alpha;\gb)\;\; \text{is entire},\cr
\big\{s:\;\sigma>\frac{1}{2},\sigma\neq 1\big\} & \text{if}\quad s=1\;\; \text{is a pole of}\;\; \zeta(s,\alpha;\gb).
\end{cases}
$$

Now, in view of the definitions of $D_{\varphi}$ and $D_{\zeta}$, let $D_1$ and $D_2$ be two open subsets of $D_{\varphi}$ and $D_{\zeta}$, respectively.
Let $\underline{H}=H(D_1)\times H(D_2)$.
On $(\Omega,{\mathcal B}(\Omega),m_H)$, define the $\underline{H}$-valued random element
$\underline{Z}(\underline{s},\omega)$ by the formula
$$
\underline{Z}(\underline{s},\omega)=\big(\varphi(s_1,\omega_1), \zeta(s_2,\alpha,\omega_2;\gb)\big),
$$
where $\underline{s}=(s_1,s_2) \in D_1\times D_2$,
\begin{equation}\label{rk-eq-3*}
\varphi(s_1,\omega_1)=\sum_{k=1}^{\infty}\frac{c_k \omega_1(k)}{k^{s_1}}
\end{equation}
and
\begin{equation}\label{rk-eq-4}
\zeta(s_2,\alpha,\omega_2;\gb)=\sum_{m=0}^{\infty}
\frac{b_m\omega_2(m)}{(m+\alpha)^{s_2}},
\end{equation}
respectively.

Denote by $P_{\underline{Z}}$
the distribution of $\underline{Z}(\underline{s},\omega)$ as an
$\underline{H}$-valued random element, i.e.,
$$
P_{\underline{Z}}(A)=m_H\{\omega\in\Omega :\;\underline{Z}(\underline{s},\omega)\in A\},
\quad A\in\mathcal{B}(\underline{H}).
$$

Let $N>0$. Define the probability measure $P_N$ on $\underline{H}$ by the formula
$$
P_{N}(A)=\frac{1}{N+1}\#\big\{0 \leq k \leq N: \;
\underline{Z}(\underline{s}+ikh)
\in A\big\}, \quad A\in\mathcal{B}(\underline{H}),
$$
where $\underline{s}+ikh=(s_1+ikh,s_2+ikh)$ with
$s_1\in D_1$, $s_2\in D_2$, and
$$
\underline{Z}(\underline{s})=
(\varphi(s_1),\zeta(s_2,\alpha;\gb)).
$$

In the course of the proof of Theorem~\ref{rk-th-2}, the first main goal is the following mixed joint discrete limit theorem.

\begin{theorem}\label{rk-th-3}
Suppose that the set $L(\pp,\alpha,h)$ is linearly independent over $\qq$. Then the probability measure
$P_N$ converges weakly to $P_{\underline{Z}}$ as $N \to \infty$.
\end{theorem}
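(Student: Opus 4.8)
The plan is to follow the classical probabilistic (Bagchi--Laurin\v{c}ikas) scheme, proving the weak convergence in three stages: first a discrete limit theorem on the torus $\Omega$, then a limit theorem for absolutely convergent approximations obtained via a continuous mapping, and finally the removal of the approximation by a mean-value estimate. The crucial stage, where the hypothesis on $L(\pp,\alpha,h)$ enters, is the first. Define the probability measure $Q_N$ on $(\Omega,\mathcal{B}(\Omega))$ by
$$Q_N(A)=\frac{1}{N+1}\#\Big\{0\le k\le N:\big((p^{-ikh}:p\in\pp),\,((m+\alpha)^{-ikh}:m\in\nn_0)\big)\in A\Big\}.$$
I claim $Q_N$ converges weakly to the Haar measure $m_H$. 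Since $\Omega$ is a compact abelian group, by the continuity theorem for Fourier transforms it suffices to treat the characters, which are indexed by finitely supported integer sequences $(\underline{k},\underline{l})=((k_p),(l_m))$; one computes
$$\int_\Omega\chi_{\underline{k},\underline{l}}\,dQ_N=\frac{1}{N+1}\sum_{n=0}^N e^{-inh\Lambda},\qquad \Lambda:=\sum_p k_p\log p+\sum_m l_m\log(m+\alpha).$$
This geometric sum tends to $0$ as $N\to\infty$ unless $h\Lambda\in 2\pi\zz$, that is, unless $\Lambda-r\cdot\frac{2\pi}{h}=0$ for some $r\in\zz$. Precisely here the linear independence of $L(\pp,\alpha,h)$ over $\qq$ forces $\underline{k}=\underline{l}=0$ and $r=0$, so the limit of each Fourier transform is the indicator of the trivial character, which is the Fourier transform of $m_H$. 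Hence $Q_N\Rightarrow m_H$.

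Next I would pass to absolutely convergent approximations. Fix $\theta>0$, set $v_n(x)=\exp(-(x/n)^\theta)$, and form the smoothed series $\varphi_n(s)=\sum_{k\ge1}c_kv_n(k)k^{-s}$ and $\zeta_n(s,\alpha;\gb)=\sum_{m\ge0}b_mv_n(m+\alpha)(m+\alpha)^{-s}$, together with the randomized versions $\varphi_n(s,\omega_1)$ and $\zeta_n(s,\alpha,\omega_2;\gb)$ obtained by inserting $\omega_1(k)$ and $\omega_2(m)$; all of these converge absolutely in the relevant strips. The map $\omega\mapsto\big(\varphi_n(\cdot,\omega_1),\zeta_n(\cdot,\alpha,\omega_2;\gb)\big)$ from $\Omega$ into $\underline{H}$ is continuous, so applying the continuous mapping theorem to $Q_N\Rightarrow m_H$ shows that the discrete measures $P_{N,n}$ formed from the shifts $\big(\varphi_n(s_1+ikh),\zeta_n(s_2+ikh,\alpha;\gb)\big)$ converge weakly, as $N\to\infty$, to the pushforward $P_n$ of $m_H$ under this map.

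Finally I would remove the smoothing, which is the main obstacle. Let $\rho$ be a metric inducing the topology of $\underline{H}$. The key estimates are the discrete mean-value bounds
$$\lim_{n\to\infty}\limsup_{N\to\infty}\frac{1}{N+1}\sum_{k=0}^N\rho\big(\underline{Z}(\underline{s}+ikh),\underline{Z}_n(\underline{s}+ikh)\big)=0,$$
where $\underline{Z}_n=(\varphi_n,\zeta_n)$. These reduce to showing that the discrete second moments of $\varphi-\varphi_n$ and of $\zeta(\cdot,\alpha;\gb)-\zeta_n(\cdot,\alpha;\gb)$ over the progression are small for large $n$; one writes $v_n$ through its Mellin transform, shifts the contour using the growth bound (b), and bounds the resulting integrals by the second-moment estimate \eqref{rk-eq-2-5} for $\varphi$ and by the standard mean square for the periodic Hurwitz zeta-function, the passage from discrete to continuous mean values being furnished by a Gallagher-type lemma. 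Together with the analogous mean approximation on the group, $\int_\Omega\rho\big(\underline{Z}(\underline{s},\omega),\underline{Z}_n(\underline{s},\omega)\big)\,dm_H\to0$, which gives $P_n\Rightarrow P_{\underline{Z}}$ as $n\to\infty$, a standard three-$\varepsilon$ argument combining $P_{N,n}\Rightarrow P_n$, the above uniform smallness, and $P_n\Rightarrow P_{\underline{Z}}$ yields $P_N\Rightarrow P_{\underline{Z}}$, as required.
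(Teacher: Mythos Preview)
Your outline is correct and follows the same overall Bagchi--Laurin\v{c}ikas scheme as the paper: Lemma~\ref{rk-lem-1} (your torus step) and Lemma~\ref{rk-lem-2} (your continuous-mapping step) are reproduced exactly, and your discrete mean-approximation estimate is the content of \eqref{rk-lem-3-1} in Lemma~\ref{rk-lem-3}, proved in the paper by the same contour shift plus Gallagher device you describe.

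Where you diverge from the paper is in the final identification of the limit. The paper introduces, in addition to $P_{N,n}$, the \emph{random} discrete measures $\widehat{P}_{N,n}$ and $\widehat{P}_N$, proves the companion approximation \eqref{rk-lem-3-2} for almost all $\omega$, and concludes (Lemma~\ref{rk-lem-4}) that $P_N$ and $\widehat{P}_N$ share a common weak limit $P$; it then identifies $P=P_{\underline{Z}}$ by showing that the one-step shift $\Phi_h$ on $\Omega$ is ergodic (again via \eqref{rk-eq-6}) and invoking the Birkhoff--Khintchine theorem to see that $\widehat{P}_N\Rightarrow P_{\underline{Z}}$. Your route bypasses $\widehat{P}_N$ and the ergodic argument entirely: you instead prove $P_n\Rightarrow P_{\underline{Z}}$ directly from the $L^1(\Omega,m_H)$-approximation $\int_\Omega\underline{\varrho}\big(\underline{Z}(\underline{s},\omega),\underline{Z}_n(\underline{s},\omega)\big)\,dm_H\to 0$, and then combine $P_{N,n}\Rightarrow P_n$, $P_n\Rightarrow P_{\underline{Z}}$, and the uniform smallness of $\underline{\varrho}\big(\underline{Z},\underline{Z}_n\big)$ along the progression via Theorem~4.2 of Billingsley. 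This is legitimate and somewhat more elementary, since the orthogonality relations on $\Omega$ make the group-side mean estimate easier than its discrete analogue \eqref{rk-lem-3-2}; the only point you should make explicit is that the random series defining $\varphi(s_1,\omega_1)$ and $\zeta(s_2,\alpha,\omega_2;\gb)$ converge in $H(D_1)$ and $H(D_2)$ for $m_H$-almost all $\omega$ (this is implicit in your $L^1$-claim and follows from the same mean-square input \eqref{rk-eq-2-5-random} that the paper cites). The paper's approach, by contrast, packages this into the ergodic step and yields as a by-product the almost-sure statement for $\widehat{P}_N$.
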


We will omit some details of the proof, because the proof follows in the standard way (see, for example, the proof of Theorem~7 of \cite{EB-AL-15rj}).     However, though the following lemma, a mixed joint discrete limit theorem on the torus $\Omega$,
is exactly the same as Lemma 1 of \cite{EB-AL-15rj}, we reproduce the detailed proof, since this result plays a crucial role, and from the proof we can see why the linear independence of
$L(\pp,\alpha,h)$ is necessary.

Define
$$
Q_N(A):=\frac{1}{N+1}\# \bigg\{0 \leq k \leq N: \big(\big(p^{-ikh}: p \in \pp\big), \big((m+\alpha)^{-ikh}: m \in \nn_0\big)\big)\in A\bigg\}, \quad A \in {\mathcal B}(\Omega).
$$

\begin{lemma}[\cite{EB-AL-15rj}]\label{rk-lem-1}
Suppose that the set $L(\pp,\alpha,h)$ satisfies the condition of Theorem~\ref{rk-th-2}. Then $Q_N$ converges weakly to the Haar measure $m_H$ as $N \to \infty$.
\end{lemma}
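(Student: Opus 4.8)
The plan is to prove that $Q_N$ converges weakly to the Haar measure $m_H$ on the compact abelian group $\Omega=\Omega_1\times\Omega_2$ by the classical method of Fourier transforms (characters), reducing the entire assertion to the hypothesis that $L(\pp,\alpha,h)$ is linearly independent over $\qq$.

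First I would describe the dual group. Since $\Omega_1=\prod_{p\in\pp}\gamma_p$ and $\Omega_2=\prod_{m=0}^{\infty}\gamma_m$ are direct products of unit circles, every character of $\Omega$ has the form
$$
\chi(\omega_1,\omega_2)=\prod_{p\in\pp}\omega_1(p)^{k_p}\prod_{m\in\nn_0}\omega_2(m)^{l_m},
$$
where the integers $(k_p)$ and $(l_m)$ are all but finitely many equal to zero. By the continuity theorem for Fourier transforms on compact groups (equivalently, the Weyl equidistribution criterion), $Q_N$ converges weakly to $m_H$ if and only if for every character $\chi$ the transform $\widehat{Q_N}(\chi):=\int_\Omega\chi\,\d Q_N$ tends, as $N\to\infty$, to $1$ when $\chi$ is trivial and to $0$ when $\chi$ is nontrivial.

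Second, I would compute $\widehat{Q_N}(\chi)$ explicitly. Substituting the points $\big(p^{-ikh}:p\in\pp\big)$ and $\big((m+\alpha)^{-ikh}:m\in\nn_0\big)$ into $\chi$ yields
$$
\widehat{Q_N}(\chi)=\frac{1}{N+1}\sum_{k=0}^{N}\exp\{-ikhA_\chi\},\qquad
A_\chi:=\sum_{p\in\pp}k_p\log p+\sum_{m\in\nn_0}l_m\log(m+\alpha),
$$
a geometric sum with ratio $\exp\{-ihA_\chi\}$. For the trivial character $A_\chi=0$, so the value is $1$, as required. For a nontrivial $\chi$ the sum is bounded by $2\big((N+1)\,|1-\exp\{-ihA_\chi\}|\big)^{-1}$, hence tends to $0$, provided the ratio satisfies $\exp\{-ihA_\chi\}\neq 1$, i.e. provided $hA_\chi\notin 2\pi\zz$.

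The crux, and the only place where the hypothesis enters, is to exclude $hA_\chi\in 2\pi\zz$ for nontrivial $\chi$. The condition $hA_\chi=2\pi n$ for some $n\in\zz$ rewrites as
$$
\sum_{p\in\pp}k_p\log p+\sum_{m\in\nn_0}l_m\log(m+\alpha)-n\cdot\frac{2\pi}{h}=0,
$$
which is precisely an integer, hence rational, linear relation among the elements of $L(\pp,\alpha,h)$. Since this set is linearly independent over $\qq$, all coefficients must vanish: $k_p=0$ for all $p\in\pp$, $l_m=0$ for all $m\in\nn_0$, and $n=0$; but then $\chi$ is trivial. Thus for every nontrivial $\chi$ the ratio $\exp\{-ihA_\chi\}\neq 1$, so $\widehat{Q_N}(\chi)\to 0$, and the weak convergence of $Q_N$ to $m_H$ follows. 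I expect the only delicate point to be the clean passage between the triviality of $\chi$ and the simultaneous vanishing of every coefficient in the displayed $\qq$-linear relation; everything else is the routine geometric-sum estimate.
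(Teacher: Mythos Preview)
Your proposal is correct and follows essentially the same approach as the paper: both arguments compute the Fourier transform of $Q_N$ over the characters of $\Omega$, reduce to a geometric sum with ratio $\exp\{-ihA_\chi\}$, and use the linear independence of $L(\pp,\alpha,h)$ over $\qq$ to rule out $hA_\chi\in 2\pi\zz$ for nontrivial $\chi$, then conclude via the continuity theorem on compact groups. Your handling of the key step is in fact slightly more streamlined than the paper's, since you treat the relation $\sum k_p\log p+\sum l_m\log(m+\alpha)=n\cdot\frac{2\pi}{h}$ in one stroke rather than first arguing that the logarithms alone are independent and then separately excluding the exponential from equalling~$1$.
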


\begin{proof} For the proof of Lemma~\ref{rk-lem-1}, we use the Fourier transformation method (for the details, see \cite{AL-96}).   The dual group of $\Omega$ is isomorphic to the group
$$
G:=\bigg(\bigoplus\limits_{p \in \pp}\zz_p\bigg)\bigoplus\bigg(\bigoplus\limits_{m \in \nn_0}\zz_m\bigg)
$$
with $\zz_p=\zz$ for all $p \in \pp$ and $\zz_m=\zz$ for all $m \in \nn_0$.
The element of $G$ is written as
$(\underline{k},\underline{l})=((k_p: p \in \pp), (l_m: m \in \nn_0)),$
where only finite number of integers $k_p$ and $l_m$ are non-zero, and acts on $\Omega$ by
$$
(\omega_1,\omega_2)\to (\omega_1^{\underline{k}},\omega_2^{\underline{l}})=\prod_{p \in \pp}\omega_1^{k_p}(p)\prod_{m \in \nn_0}\omega_2^{l_m}(m).
$$
Let $g_N(\underline{k}, \underline{l})$, for $(\underline{k}, \underline{l})\in G$, be the Fourier transform of the measure $Q_N(A)$.
Then we have
$$
g_N(\underline{k},\underline{l})=\int_{\Omega}\bigg(\prod_{p \in \pp}\omega_1^{k_p}(p)\prod_{m \in \nn_0}\omega_2^{l_m}(m)\bigg)\d Q_N.
$$
Thus, from the definition of $Q_N(A)$,
\begin{eqnarray}\label{rk-eq-5}
g_N(\underline{k},\underline{l})&=&
\frac{1}{N+1}\sum_{k=0}^{N}\prod_{p \in \pp}p^{-ikk_ph}\prod_{m \in \nn_0}(m+\alpha)^{-ikl_mh}\cr
&=&
\frac{1}{N+1}\sum_{k=0}^{N}
\exp\bigg\{-ikh\bigg(\sum_{p \in \pp}k_p\log p+\sum_{m \in \nn_0}l_m \log(m+\alpha)\bigg)\bigg\}.
\end{eqnarray}
By the assumption of the lemma, the set $L(\pp,\alpha,h)$ is linearly independent over $\qq$. Then the set $\{(\log p: p \in \pp),(\log(m+\alpha):m \in \nn_0)\}$ is linearly independent over $\qq$, and
$$
\sum_{p \in \pp}k_p\log p+\sum_{m \in \nn_0}l_m \log(m+\alpha)=0
$$
if and only if $\underline{k}=\underline{0}$ and $\underline{l}=\underline{0}$. Moreover, if $(\underline{k},\underline{l})\not =(\underline{0},\underline{0})$,
\begin{equation}\label{rk-eq-6}
\exp\bigg\{-ih\bigg(\sum_{p \in \pp}k_p\log p+\sum_{m \in \nn_0}l_m \log(m+\alpha)\bigg)\bigg\}\not =1.
\end{equation}
In fact, if \eqref{rk-eq-6} is false, then
\begin{align}\label{rk-eq-6-2}
\sum_{p \in \pp}k_p\log p+\sum_{m \in \nn_0}l_m \log(m+\alpha) =\frac{2 \pi a}{h}
\end{align}
with some $a\in \zz\setminus\{0\}$. But this contradicts to the linear independence of the set $L(\pp,\alpha,h)$.
Therefore, from \eqref{rk-eq-5} and \eqref{rk-eq-6}, we find that
\begin{eqnarray*}
&&g_N(\underline{k},\underline{l})\cr
&& \quad =\begin{cases}
1, & \text{if} \quad (\underline{k},\underline{l})=(\underline{0},\underline{0}),\cr
\frac{1-\exp\big\{-i(N+1)h\big(\sum_{p \in \pp}k_p\log p+ \sum_{m\in \nn_0}l_m\log(m+\alpha)\big)\big\}}
{(N+1)\big(1-\exp\big\{-ih\big(\sum_{p \in \pp}k_p\log p+\sum_{m \in \nn_0}l_m \log(m+\alpha)\big)\big\}\big)}, & \text{if} \quad (\underline{k},\underline{l})\not =(\underline{0},\underline{0}).
\end{cases}
\end{eqnarray*}
Hence,
$$
\lim\limits_{N \to \infty}g_N(\underline{k},\underline{l})=
\begin{cases}
1, &\text{if} \quad (\underline{k},\underline{l})=(\underline{0},\underline{0}),\cr
0, &\text{otherwise}.
\end{cases}
$$

By a continuity theorem for probability measures on compact groups (see \cite{HH-77}), we obtain the statement of the lemma, i.e., that $Q_N(A)$ converges weakly to $m_H$ as $N \to \infty$.
\end{proof}

Now, using Lemma~\ref{rk-lem-1}, we may prove a joint mixed discrete limit theorem for absolutely convergent Dirichlet series.

Let, for fixed $\widehat{\sigma}>\frac{1}{2}$,
$$
v_1(m,n)=\exp\bigg\{-\bigg(\frac{m}{n}\bigg)^{\widehat{\sigma}}\bigg\}, \quad m,n \in \nn,
$$
and
$$
v_2(m,n,\alpha)=\exp\bigg\{-\bigg(\frac{m+\alpha}{n+\alpha}\bigg)^{\widehat{\sigma}}\bigg\},
\quad m \in \nn_0, \quad n \in \nn.
$$
Define the series
\begin{eqnarray*}
\varphi_n(s)&=&\sum_{m=1}^{\infty}\frac{c_m v_1(m,n)}{m^s},\cr
\zeta_n(s,\alpha;\gb)&=&\sum_{m=0}^{\infty}\frac{b_m v_2(m,n,\alpha)}{(m+\alpha)^s},
\end{eqnarray*}
and, for $\widehat{\omega}:=\big(\widehat{\omega}_1,\widehat{\omega}_2\big) \in \Omega$,
\begin{eqnarray*}
\varphi_n(s,\widehat{\omega}_1)&=&\sum_{m=1}^{\infty}\frac{\widehat{\omega}_1(m)c_m v_1(m,n)}{m^s},\cr
\zeta_n(s,\alpha,\widehat{\omega}_2;\gb)&=&\sum_{m=0}^{\infty}\frac{\widehat{\omega}_2(m)b_m v_2(m,n,\alpha)}{(m+\alpha)^s},
\end{eqnarray*}
respectively.
These series are absolutely convergent for $\sigma>\frac{1}{2}$.

For brevity, denote
$$
{\underline{Z}}_n(\underline{s})=(\varphi_n(s_1),\zeta_n(s_2,\alpha;\gb))
$$
and
$$
{\underline{Z}}_n(\underline{s}, \widehat{\omega})=(\varphi_n(s_1,{\widehat{\omega}}_1),\zeta_n(s_2,\alpha,\widehat{\omega}_2;\gb)).
$$

Now, on the space $(\underline{H},{\mathcal B}({\underline{H}}))$, we consider the weak convergence of the measures
$$
P_{N,n}(A)=\frac{1}{N+1}\# \bigg\{0\leq k \leq N: {\underline{Z}}_n(\underline{s}+ikh)\in A \bigg\},
$$
and, for $\widehat{\omega }\in \Omega$,
$$
\widehat{P}_{N,n}(A)=\frac{1}{N+1}\# \bigg\{0\leq k \leq N: {\underline{Z}}_n(\underline{s}+ikh,\widehat{\omega})\in A \bigg\}.
$$

\begin{lemma}\label{rk-lem-2}
Suppose that the set $L(\pp, \alpha,h)$ is linearly independent over $\qq$. Then, on
$(\underline{H}, {\mathcal B}(\underline{H}))$, there exists a probability measure $P_n$ such that the measures $P_{N,n}$ and ${\widehat{P}}_{N,n}$ both converge weakly to $P_n$ as $N \to \infty$.
\end{lemma}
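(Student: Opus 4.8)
The plan is to realize both $P_{N,n}$ and $\widehat{P}_{N,n}$ as continuous images of the measure $Q_N$ from Lemma~\ref{rk-lem-1}, and then to transport the weak convergence established there through the continuous mapping theorem. First I would introduce the map $u_n\colon\Omega\to\underline{H}$ defined by
$$
u_n(\omega_1,\omega_2)=\big(\varphi_n(s_1,\omega_1),\,\zeta_n(s_2,\alpha,\omega_2;\gb)\big).
$$
Because of the rapidly decaying factors $v_1(m,n)$ and $v_2(m,n,\alpha)$ and the bounds $|\omega_1(m)|=|\omega_2(m)|=1$, both series converge absolutely and uniformly on compact subsets of $\{\sigma>\frac12\}$, uniformly in $\omega\in\Omega$; hence $u_n$ is continuous as a map into $\underline{H}=H(D_1)\times H(D_2)$. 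The crucial observation is the identity
$$
\underline{Z}_n(\underline{s}+ikh)=u_n\big((p^{-ikh}:p\in\pp),\,((m+\alpha)^{-ikh}:m\in\nn_0)\big),
$$
which holds because $m^{-ikh}=\omega_1(m)$ when $\omega_1(p)=p^{-ikh}$ (using the complete multiplicativity built into $\omega_1(m)=\prod_j\omega_1(p_j)^{l_j}$), and likewise $(m+\alpha)^{-ikh}$ is exactly the coordinate $\omega_2(m)$ for $\omega_2=((m+\alpha)^{-ikh})$.

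With this identity, $P_{N,n}$ is precisely the pushforward $Q_N\circ u_n^{-1}$. By Lemma~\ref{rk-lem-1}, $Q_N$ converges weakly to the Haar measure $m_H$, so the continuous mapping theorem for weak convergence yields that $P_{N,n}$ converges weakly to $P_n:=m_H\circ u_n^{-1}$. This defines the candidate limit measure.

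For $\widehat{P}_{N,n}$ I would use the same map together with the translation invariance of the Haar measure. Writing $\omega^{(k)}=\big((p^{-ikh}),((m+\alpha)^{-ikh})\big)$ and using that multiplication in $\Omega$ is coordinatewise, inserting the fixed $\widehat{\omega}$ gives
$$
\underline{Z}_n(\underline{s}+ikh,\widehat{\omega})=u_n\big(\widehat{\omega}\,\omega^{(k)}\big),
$$
so $\widehat{P}_{N,n}=\widehat{Q}_N\circ u_n^{-1}$, where $\widehat{Q}_N$ is the distribution of $\widehat{\omega}\,\omega^{(k)}$. Since $\widehat{\omega}\,\omega^{(k)}$ is the image of $\omega^{(k)}$ under the homeomorphism $T_{\widehat{\omega}}\colon\omega\mapsto\widehat{\omega}\,\omega$ of the compact group $\Omega$, we have $\widehat{Q}_N=Q_N\circ T_{\widehat{\omega}}^{-1}$; hence $\widehat{Q}_N$ converges weakly to $m_H\circ T_{\widehat{\omega}}^{-1}=m_H$ by the invariance of Haar measure under translation. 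Applying $u_n$ once more shows that $\widehat{P}_{N,n}$ converges weakly to the same $P_n=m_H\circ u_n^{-1}$, which is the assertion of the lemma.

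I expect the only point genuinely requiring care to be the continuity of $u_n$, equivalently the claim that the defining series converge in $H(D_1)\times H(D_2)$ uniformly in $\omega$; the rest is a bookkeeping reduction to Lemma~\ref{rk-lem-1}. That uniform convergence rests entirely on the exponential cutoffs $v_1$ and $v_2$, which force absolute convergence throughout $\{\sigma>\frac12\}$ independently of $\omega$, so no delicate analytic continuation enters at this stage.
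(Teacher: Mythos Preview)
Your argument is correct and is exactly the standard route the paper has in mind when it refers to Lemma~2 of \cite{EB-AL-15rj}: realize $P_{N,n}$ and $\widehat{P}_{N,n}$ as pushforwards of $Q_N$ (respectively of its translate by $\widehat{\omega}$) under the continuous map $u_n$, then apply Lemma~\ref{rk-lem-1}, the continuous mapping theorem, and the translation invariance of the Haar measure. The justification of continuity of $u_n$ via the absolute, $\omega$-uniform convergence forced by the cutoffs $v_1,v_2$ is precisely the point, and you have identified it correctly.
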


\begin{proof}
The proof of the lemma follows analogous to Lemma~2 from \cite{EB-AL-15rj}.
\end{proof}

The next step of the proof is to approximate the tuple $({\underline Z}(\underline{s}),{\underline{Z}}(\underline{s},\widehat{\omega}))$ by the tuple $\big({\underline Z}_n(\underline{s}), {\underline Z}_n(\underline{s},\widehat{\omega})\big)$. For this purpose, we will use the metric on the space $\underline{H}$.
For any open region $G$,
it is known (see \cite{JBC-78} or \cite{AL-96}) that there exists a sequence of compact sets $\{K_l: l\in \nn\}\subset {G}$ satisfying conditions:
\begin{enumerate}
  \item $G=\bigcup\limits_{l=1}^\infty K_l$,
  \item $K_l \subset K_{l+1}$ for any $l \in \nn$,
  \item if $K$ is a compact set, then $K \subset K_l$ for some $l \in \nn$.
\end{enumerate}
For functions $g_1,g_2 \in H(G)$, define a metric $\varrho_G$ by the formula
$$
\varrho_G(g_1,g_2)=\sum_{l=1}^{\infty}\frac{1}{2^l}\frac{\sup_{s \in K_l}|g_1(s)-g_2(s)|}{1+\sup_{s \in K_l}|g_1(s)-g_2(s)|}
$$
which induces the  topology of uniform convergence on compacta.
Put  $\varrho_1=\varrho_{D_1}$ and $\varrho_2 =\varrho_{D_2}$. Define, for
$\underline{g}_1=(g_{11},g_{21})$ and $\underline{g}_2=(g_{12},g_{22})$ from $\underline{H}$,
$$
{\underline \varrho}(\underline{g}_1,\underline{g}_2)=\max\big\{\varrho_1(g_{11},g_{12}),\varrho_2(g_{21},g_{22})\big\}.
$$
In such a way, we obtain a metric on the space $\underline{H}$ including its topology.

\begin{lemma}\label{rk-lem-3}
Suppose that the set $L(\pp,\alpha,h)$ is linearly independent over $\qq$. The equalities
\begin{align}\label{rk-lem-3-1}
\lim\limits_{n \to \infty}
\limsup\limits_{N \to \infty}
\frac{1}{N+1}
\sum_{k=0}^{N}
\underline{\varrho} \big({\underline Z}(\underline{{s}}+ikh),{\underline Z}_n(\underline{s}+ikh)\big)=0
\end{align}
and, for almost all $\omega \in \Omega$,
\begin{align}\label{rk-lem-3-2}
\lim\limits_{n \to \infty}
\limsup\limits_{N \to \infty}
\frac{1}{N+1}
\sum_{k=0}^{N}
\underline{\varrho} \big({\underline Z}(\underline{s}+ikh,\omega),{\underline Z}_n(\underline{s}+ikh,\omega)\big)=0
\end{align}
hold.
\end{lemma}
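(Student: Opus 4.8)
The plan is to split the two-dimensional estimate into two one-dimensional ones, to represent each difference $\varphi-\varphi_n$ and $\zeta(s,\alpha;\gb)-\zeta_n(s,\alpha;\gb)$ by a Mellin--Barnes integral, and to control the resulting discrete average by a discrete second moment. Since $\underline{\varrho}(\underline{g}_1,\underline{g}_2)\le\varrho_1(g_{11},g_{12})+\varrho_2(g_{21},g_{22})$, it is enough to treat the two components separately. Using $x/(1+x)\le x$, the weights $2^{-l}$ in the definition of $\varrho_{D_1}$, and the fact that any compact set lies in some $K_l$, the first component reduces to proving, for every compact $K$ contained in a strip $\{\sigma\ge\sigma_{\min}\}$ with $\sigma_{\min}>\sigma^*$, that
\[
\lim_{n\to\infty}\limsup_{N\to\infty}\frac{1}{N+1}\sum_{k=0}^{N}\sup_{s\in K}\bigl|\varphi(s+ikh)-\varphi_n(s+ikh)\bigr|=0,
\]
and similarly for $\zeta$ with $\sigma_{\min}>\frac12$.

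Next I would write $l_n(z)=\widehat{\sigma}^{-1}\Gamma(z/\widehat{\sigma})\,n^{z}$, so that the Cahen--Mellin formula gives $v_1(m,n)=\frac{1}{2\pi i}\int_{(\theta)}l_n(z)\,m^{-z}\,\d z$ for $\theta>0$ and hence $\varphi_n(s)=\frac{1}{2\pi i}\int_{(\theta)}\varphi(s+z)\,l_n(z)\,\d z$ in the region of absolute convergence. Fixing $\theta_1$ with $\sigma^*-\sigma_{\min}<\theta_1<0$ and moving the contour to $\Re z=\theta_1$, I cross the pole of $l_n$ at $z=0$ (residue $\varphi(s)$, as $l_n$ has residue $1$ there) and the pole of $\varphi$ at $z=1-s$ (residue $r\,l_n(1-s)$, with $r$ the residue of $\varphi$ at $s=1$). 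This gives
\[
\varphi(s+ikh)-\varphi_n(s+ikh)=-\frac{1}{2\pi i}\int_{-\infty}^{\infty}\varphi(s+ikh+\theta_1+i\tau)\,l_n(\theta_1+i\tau)\,\d\tau-r\,l_n(1-s-ikh).
\]
The pole term is harmless: $|l_n(1-s-ikh)|$ carries the factor $|\Gamma((1-s-ikh)/\widehat{\sigma})|$, which decays exponentially in $k$, so for each fixed $n$ the sum $\sum_{k\ge0}\sup_{s\in K}|l_n(1-s-ikh)|$ converges and its contribution to $\frac{1}{N+1}\sum_{k}$ is $O(1/(N+1))\to0$ as $N\to\infty$.

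It remains to estimate the average of the line integral, and this is the main obstacle. On $\Re z=\theta_1$ one has $\Re(s+z)=\sigma+\theta_1\ge\sigma_{\min}+\theta_1>\sigma^*$ uniformly for $s\in K$, and $|l_n(\theta_1+i\tau)|\ll n^{\theta_1}|\Gamma((\theta_1+i\tau)/\widehat{\sigma})|$. Bounding $\sup_{s\in K}$ by a Cauchy-type estimate (integrals of $|\varphi|$ over fixed vertical segments), pulling the $\tau$-integral outside, and applying the Cauchy--Schwarz inequality in $k$, everything reduces to a uniform discrete mean-square bound
\[
\frac{1}{N+1}\sum_{k=0}^{N}\bigl|\varphi(\sigma'+i(t+kh))\bigr|^{2}\ll1,\qquad\sigma'>\sigma^*,
\]
valid uniformly for bounded $t$. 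The definition of $\sigma^*$ and the Steuding conditions (iii), (v) only supply the \emph{continuous} second moment $\int_{0}^{T}|\varphi(\sigma'+it)|^{2}\,\d t\ll T$; I would pass to the progression $\{kh\}$ by Gallagher's lemma, which bounds the discrete sum by $h^{-1}\int_0^{Nh}|\varphi|^2\,\d t$ plus a cross term in $\int_0^{Nh}|\varphi'|^2\,\d t$, the latter being controlled by Cauchy's formula from the second moment on a slightly shifted line still to the right of $\sigma^*$. Dividing by $N+1$ (the relevant integration length being $Nh$) gives the bound, so the line integral contributes $\ll n^{\theta_1}\to0$; letting $N\to\infty$ and then $n\to\infty$ settles the $\varphi$ part, and the $\zeta$ part is identical using the known discrete mean square of $\zeta(s,\alpha;\gb)$.

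Finally, for \eqref{rk-lem-3-2} I would use that $\underline{Z}(\underline{s}+ikh,\omega)=\underline{Z}(\underline{s},a^{k}\omega)$, where $a=\bigl((p^{-ih}:p\in\pp),((m+\alpha)^{-ih}:m\in\nn_0)\bigr)\in\Omega$ acts by pointwise multiplication, so that the average in \eqref{rk-lem-3-2} is a Birkhoff average of the fixed function $g(\omega)=\underline{\varrho}(\underline{Z}(\underline{s},\omega),\underline{Z}_n(\underline{s},\omega))$ along the orbit of $\omega\mapsto a\omega$. The linear independence of $L(\pp,\alpha,h)$ that already drives Lemma~\ref{rk-lem-1} makes this transformation ergodic, so by the Birkhoff--Khintchine theorem the average converges, for almost all $\omega$, to $\int_{\Omega}g\,\d m_H$. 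It then suffices to show that this integral tends to $0$ as $n\to\infty$, which follows from the orthogonality of the coordinates $\omega_i(m)$ on $\Omega$ (making the $L^2(\Omega)$-distance of $\underline{Z}$ and $\underline{Z}_n$ a tail of a convergent series) together with the same contour bound. Thus the randomized case avoids the discrete second moment, and I expect the unconditional discrete mean value for the Matsumoto function $\varphi$, needed for \eqref{rk-lem-3-1}, to be the genuine difficulty.
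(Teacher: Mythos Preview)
For \eqref{rk-lem-3-1}, your approach coincides with the paper's: both use the Mellin--Barnes representation, shift the contour to the left, collect residues, and reduce the remaining line integral to a discrete second moment handled by Gallagher's lemma applied to the continuous mean square (condition (c) for $\varphi$; the known bound for $\zeta(s,\alpha;\gb)$). One small correction: the discrete mean square you need is not $\ll 1$ ``uniformly for bounded $t$'' but rather $\ll 1+|t|$ (cf.\ \eqref{disc-mean} in the paper); after swapping the $\tau$-integral with the $k$-sum the imaginary shift is $t+\tau$ with $\tau$ unbounded, and it is this polynomial growth in $|\tau|$ that the exponential decay of the Gamma factor in $l_n$ then absorbs. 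Also, the paper works in the slightly more general Matsumoto framework, so it shifts to $\Re z=\sigma_0+\delta_0-\sigma$ and must account for all finitely many poles $s_j(\varphi)$, not only $s=1$; your treatment suffices once one specializes to $\varphi\in\st$.

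For \eqref{rk-lem-3-2}, your route is genuinely different from the paper's. The paper mirrors the deterministic case: it quotes a \emph{random} continuous mean square $\int_0^T|\varphi(\sigma+it,\omega_1)|^2\,\d t=O(T)$ valid for almost all $\omega_1$ (and the analogue for $\zeta(s,\alpha,\omega_2;\gb)$), feeds this into Gallagher's lemma, and repeats the contour argument verbatim. You instead observe that the discrete time average is a Birkhoff average for the rotation $\omega\mapsto a\omega$, reduce at once to the space integral $\int_\Omega\underline{\varrho}(\underline{Z},\underline{Z}_n)\,\d m_H$, and then show this tends to $0$ via orthogonality of the characters $\omega_1(m)$, $\omega_2(m)$ (which makes the relevant $L^2(\Omega)$-norms tails of convergent series). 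This works and is conceptually clean; it trades the external input of the random mean square for the ergodicity of the shift, which in the paper's logical order is only stated later (in the proof of Theorem~\ref{rk-th-3}), though it follows directly from \eqref{rk-eq-6} in Lemma~\ref{rk-lem-1} and could be invoked here without circularity. Your closing remark slightly mislocates the difficulty: once the continuous mean square (c) is available, Gallagher's lemma makes the discrete bound for $\varphi$ routine, so neither half of the lemma is essentially harder than the other.
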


\begin{proof}
This can be shown in a way similar to the proofs of Lemmas~3 and 4 of \cite{EB-AL-15rj},
respectively.    The main body of the argument in \cite{EB-AL-15rj}, based on an application
of Gallagher's lemma, is going back to the proof of Theorem 4.1 of \cite{AL-RM-09}.
We just indicate some different points from the proof in \cite{EB-AL-15rj} and \cite{AL-RM-09}.

The starting point of the proof of \eqref{rk-lem-3-1} is the integral expression
\begin{align}\label{intexp1}
\varphi_n(s)=\frac{1}{2\pi i}\int_{a-i\infty}^{a+i\infty}\varphi(s+z)l_n(z)\frac{dz}{z}
\end{align}
and
\begin{align}\label{intexp2}
\zeta_n(s,\alpha;\gb)=\frac{1}{2\pi i}\int_{a-i\infty}^{a+i\infty}\zeta(s+z,\alpha;\gb)
l_n(z,\alpha)\frac{dz}{z},
\end{align}
where $a>\frac{1}{2}$, and
$$
l_n(z)=\frac{z}{a}\Gamma\left(\frac{z}{a}\right)n^z
\quad \text{and} \quad l_n(z,\alpha)=\frac{z}{a}\Gamma\left(\frac{z}{a}\right)(n+\alpha)^z,
$$
respectively.
We shift the paths to the left and apply the residue calculus.
The case \eqref{intexp2} is discussed in \cite{AL-RM-09}, where the path is moved to
$\Re z=b-\sigma$ with $\frac{1}{2}<b<1$ and $\sigma>b$.   In this case, the relevant poles are only
$z=0$ and $z=1-s$.
As for \eqref{intexp1}, we shift the path to $\Re z=\sigma_0+\delta_0-\sigma$, where
$\delta_0$ is a small positive number such that $\varphi(s)$ is holomorphic in the strip
$\sigma_0\leq\Re s\leq \sigma_0+\delta_0$.
We encounter all the poles $z=s_j(\varphi)-s$, $1\leq j\leq l$, so we have to consider all
the residues coming from those poles.   But they can be handled by the same method as
described in the proof of Theorem 4.1 of \cite{AL-RM-09}.

To complete the proof of \eqref{rk-lem-3-1}, it is also necessary to show the discrete mean square
estimate
\begin{align}\label{disc-mean}
\sum_{k=0}^N|\varphi(\sigma_0+\delta_0+it+ikh)|^2 \ll N(1+|t|).
\end{align}
This is an analogue of Lemma 4.3 of \cite{AL-RM-09}, and can be obtained similarly from
\eqref{rk-eq-2-5} and Gallagher's lemma (Lemma 1.4 of \cite{HLM-71}).

As for the proof of \eqref{rk-lem-3-2}, we need the ``random'' version of \eqref{rk-eq-2-5}, that is
\begin{align}\label{rk-eq-2-5-random}
\int_0^T|\varphi(\sigma+it,\omega_1)|^2 dt=O(T), \quad \sigma>\sigma_0,
\end{align}
for almost all $\omega_1\in\Omega_1$.    This is actually a special case of Lemma~10 of
\cite{AL-96-2}.
The cor\-res\-pon\-ding mean value result for $\zeta(s,\alpha,\omega_2;\gb)$ has been shown in
\cite{AJ-AL-06}.
Using those mean value results, we can show \eqref{rk-lem-3-2} in the same way as the proof of
Lemma 4 of \cite{EB-AL-15rj}.
\end{proof}

Lemma \ref{rk-lem-3} together with the weak convergence of of the measures $P_{N,n}$ and $\widehat{P}_{N,n}$ (Lem\-ma~\ref{rk-lem-2}) enables us to prove that the probability measure $P_N$ and one more probability measure defined as
$$
{\widehat P}_N(A)=\frac{1}{N+1}\# \big\{0 \leq k \leq N:{\underline Z}(\underline{s}+ikh,\omega)\in A \big\},\quad A \in {\mathcal{B}}(\underline{H}),
$$
both converge weakly to the same probability measure $P$, i.e., the following satement holds.

\begin{lemma}\label{rk-lem-4}
Suppose that the set $L(\pp,\alpha,h)$ is linearly independent over $\qq$. Then, on $(\underline{H},{\mathcal B}(\underline{H}))$, there exists a probability measure $P$ such that the measures $P_N$ and ${\widehat P}_N$ both converge weakly to $P$ as $N \to \infty$.
\end{lemma}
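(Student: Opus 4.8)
The plan is to derive the lemma from Lemmas~\ref{rk-lem-2} and~\ref{rk-lem-3} by the standard argument on the convergence of approximated random elements, exactly as in the proof of Theorem~7 of \cite{EB-AL-15rj} (the probabilistic background being that of \cite{AL-96}). Recall that $P_{N,n}$, $P_N$, $\widehat P_{N,n}$ and $\widehat P_N$ are the empirical measures attached to the four families $\underline Z_n(\underline s+ikh)$, $\underline Z(\underline s+ikh)$, $\underline Z_n(\underline s+ikh,\omega)$ and $\underline Z(\underline s+ikh,\omega)$. By Lemma~\ref{rk-lem-2}, $P_{N,n}$ converges weakly to $P_n$ as $N\to\infty$ for each fixed $n$. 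Moreover, for every $\varepsilon>0$, Markov's inequality and \eqref{rk-lem-3-1} give
$$
\limsup_{N\to\infty}\frac{1}{N+1}\#\big\{0\le k\le N:\underline\varrho\big(\underline Z(\underline s+ikh),\underline Z_n(\underline s+ikh)\big)\ge\varepsilon\big\}\le\frac{1}{\varepsilon}\,\limsup_{N\to\infty}\frac{1}{N+1}\sum_{k=0}^{N}\underline\varrho\big(\underline Z(\underline s+ikh),\underline Z_n(\underline s+ikh)\big),
$$
whose right-hand side tends to $0$ as $n\to\infty$ by \eqref{rk-lem-3-1}. The one remaining ingredient is the existence of a probability measure $P$ with $P_n$ converging weakly to $P$ as $n\to\infty$.

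To obtain such a $P$, I would first prove that the family $\{P_n:n\in\nn\}$ is tight; since $\underline H$ is a complete separable metric space, Prokhorov's theorem then gives relative compactness. This tightness is the main obstacle. I would use the identification of $P_n$ (coming from $\widehat P_{N,n}$ in Lemma~\ref{rk-lem-2}) as the distribution on $\underline H$ of the random element $\underline Z_n(\underline s,\omega)$ with $\omega$ distributed according to $m_H$. Since the coordinates $\omega_1(m)$ and $\omega_2(m)$ are orthonormal in $L^2(\Omega,m_H)$ and $0<v_1(m,n),v_2(m,n,\alpha)\le 1$, one gets, for every $\sigma>\frac12$,
$$
\int_\Omega|\varphi_n(\sigma+it,\omega_1)|^2\,\d m_H=\sum_{m=1}^{\infty}\frac{|c_m|^2 v_1(m,n)^2}{m^{2\sigma}}\le\sum_{m=1}^{\infty}\frac{|c_m|^2}{m^{2\sigma}}<\infty,
$$
the last series converging by the coefficient bound $c_m=O(m^\varepsilon)$ from condition~(i) of the class $\st$; the analogous bound for $\zeta_n(\sigma+it,\alpha,\omega_2;\gb)$ follows from the boundedness of the periodic sequence $\gb$. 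Passing from these pointwise estimates to the compact sets $K_l$ exhausting $D_1$ and $D_2$ by Cauchy's integral formula yields
$$
\int_\Omega\sup_{s\in K_l}|\varphi_n(s,\omega_1)|^2\,\d m_H\le R_l,
$$
with $R_l$ independent of $n$, and similarly for the second component. Chebyshev's inequality then produces, for each $\varepsilon>0$, compact subsets of $H(D_1)$ and $H(D_2)$ carrying $P_n$-mass at least $1-\varepsilon$ uniformly in $n$; their product is the required compact subset of $\underline H$, proving tightness.

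Relative compactness now supplies a subsequence $\{P_{n_r}\}$ converging weakly to some $P$. Applying the convergence theorem for approximated random elements (see \cite{AL-96}) along $\{n_r\}$, with the weak convergence $P_{N,n_r}\to P_{n_r}$ and the approximation estimate of the first paragraph holding verbatim with $n_r$ in place of $n$, gives that $P_N$ converges weakly to $P$ as $N\to\infty$. Since this limit cannot depend on the chosen subsequence, every weakly convergent subsequence of $\{P_n\}$ has the same limit $P$, so by tightness $P_n$ converges weakly to $P$ along the full sequence, and hence $P_N$ converges weakly to $P$. Finally, the identical argument with $\widehat P_{N,n}$ and $\widehat P_N$ in place of $P_{N,n}$ and $P_N$, using \eqref{rk-lem-3-2} (valid for almost all $\omega\in\Omega$) instead of \eqref{rk-lem-3-1}, yields the \emph{same} approximating measures $P_n$ and therefore the same limit $P$; thus $\widehat P_N$ converges weakly to $P$ for almost all $\omega$, which completes the proof.
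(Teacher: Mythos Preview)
Your proof is correct and follows essentially the same approach as the paper, which simply defers to Lemma~5 of \cite{EB-AL-15rj}: tightness of $\{P_n\}$, Prokhorov's theorem, and the approximation lemma (Theorem~4.2 of \cite{PB-68}) applied first to $P_{N,n}$ via \eqref{rk-lem-3-1} and then to $\widehat P_{N,n}$ via \eqref{rk-lem-3-2}. One minor caveat: your tightness bound $\sum_m|c_m|^2 m^{-2\sigma}<\infty$ invokes the coefficient estimate $c_m=O(m^{\varepsilon})$ from condition~(i) of the class~$\st$, whereas Section~\ref{sect-2} is stated for general Matsumoto zeta-functions; in that generality the standard route (as in \cite{AL-RM-09}) is instead to bound $\limsup_N\frac{1}{N+1}\sum_{k=0}^N\sup_{s\in K_l}|\varphi_n(s+ikh)|$ directly via the contour representation \eqref{intexp1} and the mean-square estimate \eqref{disc-mean}, which require only assumptions (a)--(c). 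For the application to Theorem~\ref{rk-th-2}, where $\varphi\in\st$ is assumed anyway, your version is perfectly adequate.
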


\begin{proof}
This lemma can be shown analogously to Lemma~5 from \cite{EB-AL-15rj}.
\end{proof}

\begin{proof}[Proof of Theorem~\ref{rk-th-3}]
As usual, in the last step of the proof of the functional discrete limit theorem, we show that the limit measure $P$ in Lemma~\ref{rk-lem-4} coincides with $P_{\underline Z}$.

Define the measurable measure-preserving transformation $\Phi_h: \Omega \to \Omega$ on
the group $\Omega$
by $\Phi_h(\omega)=f_h\omega$, $\omega \in \Omega$, where
$f_h=\{(p^{-ih}: p \in \pp), ((m+\alpha)^{-ih}: m \in \nn_0)\}$.    Again using \eqref{rk-eq-6}, we see that $\{\Phi_h(s)\}$ is a one-parameter group, and is ergodic.
This together with the well-known Birkhoff-Khintchine theorem (see \cite{HC-ML-67}) and the weak convergence of ${\widehat P}_N(A)$ gives that $P(A)=P_{\underline{Z}}(A)$ for all $A \in {\mathcal B}(\underline{H})$.    For the details, consult the proof of Theorem~7 of \cite{EB-AL-15rj}
or Theorem 6.1 of \cite{AL-RM-09}.
\end{proof}

\section{The support of the measure $P_{\underline Z}$}\label{sect-3}

To introduce the support of $P_{\underline Z}$, we repeat the arguments of Section~4 from \cite{RK-KM-15}.

Let $\varphi\in\widetilde{S}$, and $K_1$, $K_2$, $f_1$ and $f_2$ be as in the statement of Theorem~\ref{rk-th-2}.
Then we can find a real number $\sigma_0$ with $\sigma^*<\sigma_0<1$ and a positive
number $M>0$, such that $K_1$ is included in the open rectangle
$$
D_M=\{s: \;\sigma_0<\sigma<1,\; |t|<M\}.
$$
Since $\varphi(s)\in\st$, the pole of $\varphi$ is at most at $s=1$, then,
 in this case, we find that
$$
D_{\varphi}=\{s:\;\sigma>\sigma_0, \;\sigma\neq 1\}.
$$
Therefore $D_M$ is an open subset of $D_{\varphi}$.
Also we can find $T>0$ such that $K_2$ belongs to the open rectangle
$$
D_T=\bigg\{s: \;\frac{1}{2}<\sigma<1, \;|t|<T\bigg\}.
$$

To obtain the support of the measure $P_{\underline{Z}}$ we will use  Theorem~\ref{rk-th-3} with  $D_1=D_M$ and $D_2=D_T$.
Let $S_{\varphi}$ be the set of all $f\in H(D_M)$ which is non-vanishing on $D_M$, or
constantly equivalent to $0$ on $D_M$.

\begin{theorem}\label{rk-th-4}
Suppose that the set $L(\pp,\alpha,h)$ is linearly independent over $\qq$. The support of the measure $P_{\underline{Z}}$ is the set
$S=S_{\varphi}\times H(D_T)$.
\end{theorem}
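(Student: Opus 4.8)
The plan is to exploit the product structure carried by both the Haar measure and the random element, so that the computation of the support splits into two independent problems each of which is classical. Note first that $P_{\underline Z}$ is defined purely on the probability space $(\Omega,\mathcal{B}(\Omega),m_H)$ and does not involve $h$ at all; hence its support is literally the same object as in the continuous case, and the whole computation reproduces Section~4 of \cite{RK-KM-15}, the hypothesis on $L(\pp,\alpha,h)$ entering only through the $\qq$-linear independence it forces on the relevant frequencies.

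First I would record that $m_H=m_{1H}\times m_{2H}$ and that in $\underline{Z}(\underline{s},\omega)=(\varphi(s_1,\omega_1),\zeta(s_2,\alpha,\omega_2;\gb))$ the first coordinate depends only on $\omega_1\in\Omega_1$ and the second only on $\omega_2\in\Omega_2$. Thus the two coordinates are independent $H(D_M)$- and $H(D_T)$-valued random elements, $P_{\underline Z}$ factors as a product $P_\varphi\times P_\zeta$ of their distributions, and the support of a product measure on a product of separable metric spaces is the product of the supports. It therefore suffices to prove that the support of $P_\varphi$ equals $S_\varphi$ and that the support of $P_\zeta$ equals $H(D_T)$.

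For the first factor I would use the Euler product (iv), which gives
$$
\log\varphi(s,\omega_1)=\sum_{p\in\pp}\sum_{j=1}^{l}\left(-\log\left(1-\frac{a_j(p)\omega_1(p)}{p^{s}}\right)\right),
$$
exhibiting $\log\varphi(\cdot,\omega_1)$ as a sum of independent $H(D_M)$-valued random elements indexed by the primes. The theorem on the support of a sum of independent random elements (as applied in \cite{RK-KM-15}) identifies that support with the closure of the set of convergent series formed from the supports of the summands; the $\qq$-linear independence of $\{\log p:p\in\pp\}$, automatic from unique factorization, then makes this closure all of $H(D_M)$. Passing back through the exponential and invoking Hurwitz's theorem, the support of $\varphi(\cdot,\omega_1)$ is the closure of the nowhere-vanishing functions, namely $S_\varphi$. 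For the second factor, $\zeta(s_2,\alpha,\omega_2;\gb)=\sum_{m=0}^{\infty}b_m\omega_2(m)(m+\alpha)^{-s_2}$ is already a sum of independent $H(D_T)$-valued random elements, and the same support theorem applies; here the decisive input is the $\qq$-linear independence of $\{\log(m+\alpha):m\in\nn_0\}$, which is exactly what the hypothesis on $L(\pp,\alpha,h)$ provides. Since $\zeta(s,\alpha;\gb)$ carries no Euler product, there is no multiplicative constraint to exclude functions with zeros, and the denseness argument delivers the whole space $H(D_T)$.

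I expect the technical heart to lie in these two denseness statements rather than in the product-measure bookkeeping. Concretely, the obstacle is to verify the hypotheses of the support theorem and then to show, via a Hahn--Banach argument together with a denseness analysis of the entire functions arising from functionals that annihilate every summand, that finite linear combinations of the building blocks $p^{-s}$ (respectively $(m+\alpha)^{-s}$) are dense in the appropriate space. It is precisely at this point that the $\qq$-linear independence of the frequencies is used, and this is the one part of the argument that cannot be bypassed by formal manipulation.
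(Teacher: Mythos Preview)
Your proposal is correct and aligns with the paper's own treatment. The paper's proof is a one-line reference to Lemma~4.3 of \cite{RK-KM-15} (and Theorem~8 of \cite{EB-AL-15rj}), noting only that membership in $\widetilde{S}$ is essential; you have correctly identified that $P_{\underline{Z}}$ does not involve $h$, so the continuous-case support computation carries over verbatim, and you have supplied the outline of that computation---product-measure factorisation, the support theorem for sums of independent random elements, and the denseness arguments driven by the $\qq$-linear independence of $\{\log p\}$ and of $\{\log(m+\alpha)\}$---which is precisely the content of the cited lemma.
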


\begin{proof}
This is an analogue to Lemma~4.3 of \cite{RK-KM-15} or Theorem~8 from \cite{EB-AL-15rj}.
The fact that $\varphi\in\widetilde{S}$ is essentially used here.
\end{proof}

\section{Proof of the mixed joint discrete universality theorem}\label{sect-4}

The proof of Theorem~\ref{rk-th-2} follows from Theorems~\ref{rk-th-3} and \ref{rk-th-4} and the Mergelyan theorem (see \cite{SNM-52}) which we state as a lemma.

\begin{lemma}[Mergelyan]\label{rk-lem-5}
Let $K \subset \cc$ be a compact subset with connected complement, and $f(s)$ be a continuous function on $K$ which is analytic in the interior of $K$. Then, for every $\varepsilon>0$, there exists a polynomial $p(s)$ such that
$$
\sup\limits_{s \in K}|f(s)-p(s)|<\varepsilon.
$$
\end{lemma}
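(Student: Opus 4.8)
The plan is to follow the classical route to Mergelyan's theorem, reducing the approximation of a merely continuous function to a Runge-type argument by smoothing and then replacing Cauchy kernels by functions holomorphic near $K$. First I would extend $f$ to a compactly supported continuous function on all of $\cc$ (by Tietze's theorem) and record its modulus of continuity $\omega(\delta)=\sup\{|f(z_1)-f(z_2)|:|z_1-z_2|\le\delta\}$, which tends to $0$ as $\delta\to0$. Fixing $\delta>0$, I convolve $f$ with an explicit $C^1$ bump $A_\delta$ supported in the disc $\{|z|\le\delta\}$, normalized by $\iint A_\delta\,dm=1$ and satisfying $\iint|\bar\partial A_\delta|\,dm\le C/\delta$. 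The smoothed function $\Phi=f*A_\delta$ is then $C^1$, satisfies $\sup_{K}|\Phi-f|\le\omega(\delta)$, and --- crucially --- $\bar\partial\Phi$ vanishes at every point $z$ near which $f$ is holomorphic on $\{|\zeta-z|<\delta\}$, while $|\bar\partial\Phi|\le C\omega(\delta)/\delta$ everywhere. Hence $\bar\partial\Phi$ is supported in a compact set $X$ lying within distance $\delta$ of the boundary of $K$.

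Next I would invoke the Cauchy--Green (Cauchy--Pompeiu) representation
\[
\Phi(z)=-\frac{1}{\pi}\iint_{X}\frac{\bar\partial\Phi(\zeta)}{\zeta-z}\,dm(\zeta),
\]
which exhibits $\Phi$ as a superposition of Cauchy kernels $1/(\zeta-z)$ with sources $\zeta\in X$, i.e.\ near the boundary of $K$. The idea is to replace each kernel by a function of $z$ that is holomorphic in a neighbourhood of $K$ while changing the integral by at most a constant multiple of $\omega(\delta)$. To this end I cover $X$ by finitely many discs $D_j$ of radius $2\delta$; since each centre lies within $\delta$ of $\cc\setminus K^{\circ}$, the closed disc $\overline{D}_j$ meets $\cc\setminus K$, and here I use that $\cc\setminus K$ is connected to select a connected compact $E_j\subset\overline{D}_j$ of diameter $\gtrsim\delta$, disjoint from the interior of $K$, and with connected complement on the sphere.

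The hard part will be the underlying function-theoretic lemma that makes the kernel replacement work: for a connected compact set $E$ of diameter at least $r$ contained in a disc of radius $r$ and with connected complement, there exist a function $g\in H(\cc\setminus E)$ and a constant $b$ with $|g(z)|<C/r$ and
\[
\Bigl|g(z)+(z-b)g(z)^2-\frac{1}{z-\zeta}\Bigr|<\frac{C\,r^2}{|z-\zeta|^3}
\]
for all $\zeta$ in the disc and all $z\notin E$. I would prove this by mapping the (sphere-)complement of the connected set $E$ conformally onto the exterior of the unit disc fixing $\infty$ (Riemann mapping theorem), expanding the relevant expression in a Laurent series at $\infty$, and estimating the Taylor coefficients through the logarithmic capacity of $E$, which is comparable to $r$. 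This lemma, and the role of the connectedness hypotheses in it, is the crux of the whole argument; once it is available, the near-diagonal contribution is controlled by the bound $|g|<C/r$ and the far contribution by integrating the kernel error $C r^2/|z-\zeta|^3$ against $|\bar\partial\Phi|\le C\omega(\delta)/\delta$.

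Finally I would assemble the pieces: replacing $1/(\zeta-z)$ on each $D_j$ by the holomorphic surrogate $G_j(z)=g_{E_j}(z)+(z-b_j)g_{E_j}(z)^2$ produces a function $F$, holomorphic in a neighbourhood of $K$, with $\sup_{K}|F-\Phi|\le C\omega(\delta)$ and hence $\sup_{K}|F-f|\le C\omega(\delta)$ with an absolute constant $C$. Because $\cc\setminus K$ is connected, Runge's theorem lets me push all poles to infinity and approximate $F$ uniformly on $K$ by polynomials. Choosing $\delta$ so small that $C\omega(\delta)<\varepsilon/2$, and then a polynomial $p$ with $\sup_{K}|F-p|<\varepsilon/2$, yields $\sup_{K}|f-p|<\varepsilon$, as required.
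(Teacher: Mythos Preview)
Your proposal sketches the classical proof of Mergelyan's theorem essentially as in Rudin's \emph{Real and Complex Analysis}: Tietze extension, mollification, the Cauchy--Pompeiu representation, the Vitushkin-type kernel-replacement lemma exploiting that each boundary disc meets the connected complement, and a final appeal to Runge's theorem. The outline is correct and the identification of the kernel-replacement lemma as the crux is apt.

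However, the paper does not prove this lemma at all: it is stated as a black box and attributed to Mergelyan via the reference \cite{SNM-52}. So your proof is not a different route from the paper's proof --- it is simply a proof where the paper offers none. What you gain is a self-contained argument; what the paper gains is brevity, since Mergelyan's theorem is a standard tool in the universality literature and is invoked here only to reduce the target functions $f_1$ and $f_2$ to $e^{p_1(s)}$ and $p_2(s)$ before applying Theorems~\ref{rk-th-3} and~\ref{rk-th-4}. If you intend to include a proof, you should either flesh out the conformal-mapping estimate for the kernel-replacement lemma (the step you flag as ``the hard part'' is indeed where most of the work lies, and your sketch of it via logarithmic capacity is terse) or, matching the paper's style, simply cite a standard reference.
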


\begin{proof}[Proof of Theorem~\ref{rk-th-2}]
By Lemma~\ref{rk-lem-5}, there exist polynomials $p_1(s)$ and $p_2(s)$ such that
\begin{equation}\label{rk-eq-10}
\sup\limits_{s \in K_1}\big|f_1(s)-e^{p_1(s)}\big|<\frac{\varepsilon}{2}
\end{equation}
and
\begin{equation}\label{rk-eq-11}
\sup\limits_{s \in K_2}\big|f_2(s)-p_2(s)\big|<\frac{\varepsilon}{2}.
\end{equation}

We introduce the set
$$
G=\bigg\{
(g_1,g_2)\in \underline{H}: \sup\limits_{s\in K_1}|g_1(s)-e^{p_1(s)}|<\frac{\varepsilon}{2},
\sup\limits_{s\in K_2}|g_2(s)-p_2(s)|<\frac{\varepsilon}{2}
\bigg\}.
$$
Then $G$ is an open set of the space $\underline{H}$. In virtue of Theorem~\ref{rk-th-4}, it is an open neighbourhood of the element $(e^{p_1(s)},p_2(s))$ of the support of $P_{\underline Z}$. Thus $P_{\underline Z}(G)>0$. Using Theorem~\ref{rk-th-3} and an equivalent statement of the weak convergence in terms of open sets (see \cite{PB-68}), we obtain
$$
\liminf\limits_{N \to \infty}P_N(G)\geq P_{\underline Z}(G)>0.
$$
This and the definitions of $P_N$ and $G$ show that
\begin{eqnarray}\label{rk-eq-12}
\liminf\limits_{N \to \infty}\frac{1}{N+1}\#
\bigg\{
0 \leq k \leq N: && \sup\limits_{s\in K_1}\big|\varphi(s+ikh)-e^{p_1(s)}\big|<\frac{\varepsilon}{2},\cr
&& \sup\limits_{s\in K_2}\big|\zeta(s+ikh,\alpha;\gb)-p_2(s)\big|<\frac{\varepsilon}{2}
\bigg\}>0.
\end{eqnarray}

From \eqref{rk-eq-10} and \eqref{rk-eq-11}, we deduce that
\begin{eqnarray*}
&& \bigg\{
0 \leq k \leq N:  \sup\limits_{s\in K_1}\big|\varphi(s+ikh)-f_1(s)\big|<\varepsilon, \sup\limits_{s\in K_2}\big|\zeta(s+ikh,\alpha;\gb)-f_2(s)\big|<\varepsilon
\bigg\} \cr&&  \supset
\bigg\{
0 \leq k \leq N:  \sup\limits_{s\in K_1}\big|\varphi(s+ikh)-e^{p_1(s)}\big|<\frac{\varepsilon}{2}, \sup\limits_{s\in K_2}\big|\zeta(s+ikh,\alpha;\gb)-p_2(s)\big|<\frac{\varepsilon}{2}
\bigg\}.
\end{eqnarray*}
This together with the inequality \eqref{rk-eq-12} gives the assertion of the theorem.
\end{proof}

\section{The case of modified zeta-functions}\label{sect-5}

In Section \ref{sect-1}, we mentioned an incomplete point in \cite{RK-09}.    An inaccuracy is
actually included in a former paper \cite{RK-DK-09}, whose result is applied to \cite{RK-09}.
On p.~103 of \cite{RK-DK-09}, the same as \eqref{rk-eq-6}
for $(\underline{k},\underline{l})\not =(\underline{0},\underline{0})$
is claimed under the assumption that
$\alpha$ is transcendental and $\exp\{\frac{2\pi}{h}\}$ is rational.   The same reasoning as in the case of
\eqref{rk-eq-6} is valid if there is some $l_m\neq 0$, because from \eqref{rk-eq-6-2} we have
\begin{align}\label{5-1}
\prod_{p\in\pp}p^{k_p}\prod_{m\in\nn_0}(m+\alpha)^{l_m}=\bigg(\exp\bigg\{\frac{2\pi}{h}\bigg\}\bigg)^a,
\end{align}
which contradicts the assumption.   But if all $l_m=0$, then \eqref{5-1} does not produce a contradiction.
Therefore the results in \cite{RK-DK-09}, and hence in \cite{RK-09}, is to be amended.

Write $\exp\big\{\frac{2\pi}{h}\big\}=\frac{a}{b}$, $a,b\in\mathbb{Z}$, $(a,b)=1$, and denote by $\mathbb{P}_h$ the set of
all primes ap\-pea\-ring as prime divisors of $a$ or $b$.  Instead of $Q_N(A)$ defined in
Section \ref{sect-2}, we define $Q_{N,h}(A)$ by replacing $\mathbb{P}$ in the definition of
$Q_N(A)$ by $\mathbb{P}\setminus\mathbb{P}_h$.
Let
$$
\Omega_{1h}=\prod_{p\in\mathbb{P}\setminus\mathbb{P}_h}\gamma_p,
$$
and denote the probability Haar measure on $(\Omega_{1h},\mathcal{B}(\Omega_{1h}))$ by $m_{1hH}$.

\begin{lemma}\label{lem-5-1}
Assume that $\alpha$ is transcendental and $\exp\big\{\frac{2\pi}{h}\big\}$ is rational.    Then $Q_{N,h}$ converges
weakly to the Haar measure $m_{hH}=m_{1hH}\times m_{2H}$ on the space
$\Omega_h=\Omega_{1h}\times\Omega_2$ as
$N\to\infty$.
\end{lemma}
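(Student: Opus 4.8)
The plan is to reproduce the Fourier transform argument of Lemma~\ref{rk-lem-1} verbatim, the only---but decisive---change occurring in the verification of the non-triviality condition \eqref{rk-eq-6} for the restricted index set. The dual group of $\Omega_h=\Omega_{1h}\times\Omega_2$ is isomorphic to $\big(\bigoplus_{p\in\pp\setminus\mathbb{P}_h}\zz_p\big)\bigoplus\big(\bigoplus_{m\in\nn_0}\zz_m\big)$, its elements being $(\underline{k},\underline{l})$ with only finitely many non-zero $k_p$ ($p\in\pp\setminus\mathbb{P}_h$) and $l_m$ ($m\in\nn_0$). Exactly as in \eqref{rk-eq-5}, the Fourier transform of $Q_{N,h}$ is
\begin{equation*}
g_{N,h}(\underline{k},\underline{l})=\frac{1}{N+1}\sum_{k=0}^{N}\exp\bigg\{-ikh\bigg(\sum_{p\in\pp\setminus\mathbb{P}_h}k_p\log p+\sum_{m\in\nn_0}l_m\log(m+\alpha)\bigg)\bigg\}.
\end{equation*}
By the geometric-series evaluation used in Lemma~\ref{rk-lem-1}, it suffices to show that for every $(\underline{k},\underline{l})\neq(\underline{0},\underline{0})$ the base of this exponential differs from $1$; then $g_{N,h}\to0$, whereas $g_{N,h}\equiv1$ at the trivial character, and the continuity theorem \cite{HH-77} gives weak convergence of $Q_{N,h}$ to $m_{hH}$.

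First I would recast the non-triviality condition multiplicatively: supposing the base equals $1$, the analogue of \eqref{rk-eq-6-2}, after exponentiation as in \eqref{5-1} but with the prime product restricted to $\pp\setminus\mathbb{P}_h$, yields
\begin{equation*}
\prod_{p\in\pp\setminus\mathbb{P}_h}p^{k_p}\prod_{m\in\nn_0}(m+\alpha)^{l_m}=\bigg(\frac{a}{b}\bigg)^{\nu}
\end{equation*}
for some $\nu\in\zz$, where $\exp\{2\pi/h\}=a/b$, $(a,b)=1$. If some $l_m\neq0$, then, exactly as remarked after \eqref{5-1}, isolating $\prod_{m}(m+\alpha)^{l_m}$ shows it equals a non-zero rational; clearing denominators exhibits $\alpha$ as a root of a non-zero polynomial with rational coefficients, contradicting the transcendence of $\alpha$. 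This is the case already handled by the reasoning of \cite{RK-DK-09}.

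The hard part---and the whole reason for deleting $\mathbb{P}_h$---is the remaining case in which all $l_m=0$, so that some $k_p\neq0$; this is precisely the situation that was overlooked in \cite{RK-DK-09} and \cite{RK-09}. Here the identity collapses to $\prod_{p\in\pp\setminus\mathbb{P}_h}p^{k_p}=(a/b)^{\nu}$. Every prime occurring on the left lies in $\pp\setminus\mathbb{P}_h$, hence divides neither $a$ nor $b$, whereas the right-hand side is composed solely of primes dividing $a$ or $b$. Comparing $p$-adic valuations at each $p\in\pp\setminus\mathbb{P}_h$ forces $k_p=0$ for all such $p$ by unique factorization in $\qq^{\times}$; the identity then reduces to $(a/b)^{\nu}=1$, and since $a/b=\exp\{2\pi/h\}>1$ this gives $\nu=0$. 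Thus $(\underline{k},\underline{l})=(\underline{0},\underline{0})$, contrary to assumption. This establishes the non-triviality condition in all cases and completes the argument.
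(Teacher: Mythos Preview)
Your proof is correct and follows exactly the approach of the paper: mimic the Fourier-transform argument of Lemma~\ref{rk-lem-1}, with the only new work being the verification that the restricted version of \eqref{5-1} is impossible even when all $l_m=0$. The paper states this in one line (``the resulting equality is impossible even if all $l_m=0$''), whereas you spell out the unique-factorization comparison between the primes in $\pp\setminus\mathbb{P}_h$ on the left and those in $\mathbb{P}_h$ on the right; this is precisely the intended justification.
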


\begin{proof}
If we replace $\mathbb{P}$ by $\mathbb{P}\setminus\mathbb{P}_h$ in \eqref{5-1}, then the resulting equality is impossible even if all $l_m=0$.    Therefore \eqref{rk-eq-6} is valid for any
$(\underline{k},\underline{l})\not =(\underline{0},\underline{0})$, and so we can mimic the proof of
Lemma \ref{rk-lem-1}.
\end{proof}

This lemma is the corrected version of Lemma 2.1 of \cite{RK-DK-09}.
Let $\chi$ be a Dirichlet cha\-rac\-ter, and define a modified Dirichlet $L$-function by
$$
L_h(s,\chi)=\prod_{p\in\mathbb{P}\setminus\mathbb{P}_h}\left(1-\frac{\chi(p)}{p^s}\right)^{-1}.
$$
Then, using Lemma \ref{lem-5-1}, we can show a mixed joint discrete universality theorem
for $L_h(s,\chi)$ and a periodic Hurwitz zeta-function, by the argument in \cite{RK-09}.
This is the corrected version of Theorem 1.7 of \cite{RK-09}, which was already mentioned in
\cite{KM-15}.

It is possible to generalize the above arguments to the class of Matsumoto zeta-func\-tions.
We conclude the present paper with the statement of such results.

Define the modified Matsumoto zeta-function by
\begin{align}\label{5-2}
{\widetilde\varphi}_h(s)=\prod_{m\in\mathbb{N}\setminus\mathbb{N}_h}\prod_{j=1}^{g(m)}\left(1-a_m^{(j)}p_m^{-sf(j,m)}\right)^{-1},
\end{align}
where $\mathbb{N}_h$ is the set of all $m\in\mathbb{N}$ such that $p_m\in\mathbb{P}_h$,
and $\varphi_h(s)={\widetilde\varphi}_h(s+\alpha+\beta)$.
The difference between $\varphi_h(s)$ and $\varphi(s)$ is only finitely many Euler factors,
so their analytic properties are not so different.    In particular, if $\varphi(s)$ satisfies
the properties (a), (b) and (c), then so is $\varphi_h(s)$, too.    Therefore,
the method developed in the previous sections of the present paper can be applied to
$\varphi_h(s)$.   Let
\begin{align*}
\underline{Z}_h(\underline{s})&=(\varphi_h(s_1),\zeta(s_2,\alpha;\gb)),\\
\underline{Z}_h(\underline{s},\omega_h)&=(\varphi_h(s_1,\omega_{1h}),\zeta(s_2,\alpha,\omega_2;\gb)),
\end{align*}
where $\omega_h=(\omega_{1h},\omega_2)\in \Omega_h$.
Define $P_{\underline{Z},h}$ and $P_{N,h}$ analogously to $P_{\underline{Z}}$ and $P_N$, just
replacing $\underline{Z}(\underline{s},\omega)$ and
$\underline{Z}(\underline{s}+ikh)$ by $\underline{Z}_h(\underline{s},\omega_h)$
and $\underline{Z}_h(\underline{s}+ikh)$, respectively.    Then, using Lemma \ref{lem-5-1},
we obtain

\begin{theorem}\label{thm-5-1}
Suppose that $\alpha$ is transcendental and $\exp\big\{\frac{2\pi}{h}\big\}$ is rational.   Then $P_{N,h}$
converges weakly to $P_{\underline{Z},h}$ as $N\to\infty$.
\end{theorem}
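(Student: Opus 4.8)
The plan is to retrace the proof of Theorem~\ref{rk-th-3} step by step, with the group $\Omega$ replaced throughout by $\Omega_h=\Omega_{1h}\times\Omega_2$, the measure $Q_N$ replaced by $Q_{N,h}$, and Lemma~\ref{rk-lem-1} replaced by its corrected counterpart Lemma~\ref{lem-5-1}. This substitution is legitimate because the Dirichlet series of $\varphi_h(s)$ differs from that of $\varphi(s)$ only by the removal of the finitely many Euler factors indexed by $\mathbb{N}_h$; hence $\varphi_h$ inherits the analytic conditions (a), (b) and (c), and in particular the mean square estimate \eqref{rk-eq-2-5}. First I would record the torus-level input: by Lemma~\ref{lem-5-1}, $Q_{N,h}$ converges weakly to the Haar measure $m_{hH}$ on $\Omega_h$. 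Then, introducing the truncated absolutely convergent series $\varphi_{h,n}(s)$ and $\zeta_n(s,\alpha;\gb)$ together with the corresponding random elements over $\Omega_h$, I would establish the analogue of Lemma~\ref{rk-lem-2}, namely that the discrete measures $P_{N,n,h}$ and $\widehat{P}_{N,n,h}$ both converge weakly to a common limit $P_{n,h}$; this follows from Lemma~\ref{lem-5-1} exactly as Lemma~\ref{rk-lem-2} follows from Lemma~\ref{rk-lem-1}, since the truncation renders the relevant maps continuous on the torus.

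Next comes the approximation step, the analogue of Lemma~\ref{rk-lem-3}, which is where the inheritance of the analytic conditions is used. Because $\varphi_h$ satisfies (a)--(c), the integral representation \eqref{intexp1} and the residue-shifting argument apply verbatim with $\varphi$ replaced by $\varphi_h$: the only poles met are $z=0$, $z=1-s$, and the finitely many $z=s_j(\varphi_h)-s$, all treated as in Lemma~\ref{rk-lem-3}. The discrete mean square bound \eqref{disc-mean} for $\varphi_h$ follows from \eqref{rk-eq-2-5} for $\varphi_h$ and Gallagher's lemma, while the ``random'' version \eqref{rk-eq-2-5-random} holds for almost all $\omega_{1h}\in\Omega_{1h}$ by the same appeal to \cite{AL-96-2}. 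Combining these with the weak convergence of $P_{N,n,h}$ and $\widehat{P}_{N,n,h}$ yields, as in Lemma~\ref{rk-lem-4}, a common weak limit $P_h$ of $P_{N,h}$ and its ``hatted'' companion $\widehat{P}_{N,h}$.

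It remains to identify $P_h$ with $P_{\underline{Z},h}$. Define $\Phi_h\colon\Omega_h\to\Omega_h$ by $\Phi_h(\omega_h)=f_h\omega_h$, where now $f_h=\{(p^{-ih}:p\in\mathbb{P}\setminus\mathbb{P}_h),((m+\alpha)^{-ih}:m\in\nn_0)\}$, and appeal to the Birkhoff--Khintchine theorem together with the weak convergence of $\widehat{P}_{N,h}$. The step I expect to be the crux --- and the one for which Section~\ref{sect-5} was written --- is the ergodicity of $\Phi_h$. On the full group $\Omega$ the usual derivation via \eqref{rk-eq-6} collapses when all $l_m=0$ and $\exp\{\frac{2\pi}{h}\}$ is rational, since \eqref{5-1} then fails to give a contradiction. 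On $\Omega_h$, however, the primes in $\mathbb{P}_h$ have been excised, so, as noted in the proof of Lemma~\ref{lem-5-1}, the left-hand side of \eqref{5-1} can no longer equal a power of $\exp\{\frac{2\pi}{h}\}$ even when all $l_m=0$; consequently \eqref{rk-eq-6} is restored for every $(\underline{k},\underline{l})\neq(\underline{0},\underline{0})$ and $\Phi_h$ is ergodic. This gives $P_h=P_{\underline{Z},h}$, completing the proof. Every other step is a routine transcription of Section~\ref{sect-2}, so I anticipate the verification of ergodicity on the restricted group to be the only genuinely new point.
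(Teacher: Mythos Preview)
Your proposal is correct and matches the paper's own approach: the paper does not give a separate proof of Theorem~\ref{thm-5-1} but simply remarks that $\varphi_h$ inherits conditions (a)--(c) from $\varphi$ (since only finitely many Euler factors are removed), so that ``the method developed in the previous sections of the present paper can be applied to $\varphi_h(s)$'' with Lemma~\ref{lem-5-1} in place of Lemma~\ref{rk-lem-1}. Your write-up is a faithful elaboration of this, and you have correctly identified the ergodicity of $\Phi_h$ on $\Omega_h$---secured by the excision of $\mathbb{P}_h$---as the one genuinely new point.
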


\begin{theorem}\label{thm-5-2}
Let $\varphi(s)\in {\widetilde S}$, $K_1$, $K_2$, $f_1(s)$ and $f_2(s)$ satisfy the conditions as in Theorem~\ref{rk-th-1}. Suppose that $\alpha$ is transcendental and $\exp\big\{\frac{2\pi}{h}\big\}$ is rational.
Then, for every $\varepsilon>0$,
\begin{eqnarray*}
\liminf\limits_{N \to \infty}
\frac{1}{N+1}
\#
\bigg\{0\leq k \leq N:
 && \sup\limits_{s \in K_1}|\varphi_h(s+ikh)-f_1(s)|<\varepsilon, \\ &&  \sup\limits_{s\in K_2}|\zeta(s+ikh,\alpha;\gb)-f_2(s)|<\varepsilon\bigg\}>0.
\end{eqnarray*}
\end{theorem}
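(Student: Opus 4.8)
The plan is to follow the proof of Theorem~\ref{rk-th-2} line by line, replacing the limit theorem (Theorem~\ref{rk-th-3}) by its modified counterpart Theorem~\ref{thm-5-1} and supplying the support of $P_{\underline{Z},h}$ in place of that of $P_{\underline{Z}}$. Concretely, the three ingredients I would assemble are: the weak convergence $P_{N,h}\to P_{\underline{Z},h}$, which is already available as Theorem~\ref{thm-5-1}; a description of the support of $P_{\underline{Z},h}$ analogous to Theorem~\ref{rk-th-4}; and the Mergelyan approximation of Lemma~\ref{rk-lem-5}.

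First I would establish the support statement: the support of $P_{\underline{Z},h}$ equals $S_{\varphi_h}\times H(D_T)$, where $S_{\varphi_h}$ is the set of $f\in H(D_M)$ that are either non-vanishing on $D_M$ or identically zero. The key observation is that $\varphi_h$ differs from $\varphi$ only in finitely many Euler factors, so that $\varphi_h\in\st$ whenever $\varphi\in\st$: conditions (i)--(iv) are immediate, and the density condition (v) is unchanged, since deleting the finitely many primes of $\pp_h$ alters $\sum_{p\le x}|a(p)|^2$ only by a bounded amount and hence preserves the limit $\kappa$. In particular the associated parameter $\sigma^*$ is unaffected, so the analytic setup of Section~\ref{sect-3} --- the choice of $\sigma_0$ and the rectangles $D_M$, $D_T$ --- applies to $\varphi_h$ verbatim. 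The support argument proving Theorem~\ref{rk-th-4} is then rerun over the modified torus $\Omega_h=\Omega_{1h}\times\Omega_2$ of Lemma~\ref{lem-5-1} instead of $\Omega$; because $\pp\setminus\pp_h$ is still infinite, the denseness estimates for the first coordinate (as in Section~4 of \cite{RK-KM-15}) survive unchanged, and the support of the first component is again $S_{\varphi_h}$.

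With the support in hand, the remainder is a transcription of the proof of Theorem~\ref{rk-th-2}. By Lemma~\ref{rk-lem-5} I choose polynomials $p_1(s)$ and $p_2(s)$ satisfying the analogues of \eqref{rk-eq-10} and \eqref{rk-eq-11}, and set
$$
G=\bigg\{(g_1,g_2)\in\underline{H}:\ \sup_{s\in K_1}\big|g_1(s)-e^{p_1(s)}\big|<\frac{\varepsilon}{2},\ \sup_{s\in K_2}\big|g_2(s)-p_2(s)\big|<\frac{\varepsilon}{2}\bigg\}.
$$
Since $e^{p_1(s)}$ is non-vanishing it lies in $S_{\varphi_h}$, and $p_2(s)\in H(D_T)$, so $(e^{p_1},p_2)$ is a point of the support and $G$ is an open neighbourhood of it; hence $P_{\underline{Z},h}(G)>0$. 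Theorem~\ref{thm-5-1} together with the open-set form of weak convergence (see \cite{PB-68}) then gives $\liminf_{N\to\infty}P_{N,h}(G)\ge P_{\underline{Z},h}(G)>0$. Unwinding the definitions of $P_{N,h}$ and $G$, and passing from $e^{p_1}$, $p_2$ back to $f_1$, $f_2$ by the triangle inequality exactly as in the step from \eqref{rk-eq-12} to the final inclusion, completes the proof.

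The main obstacle is the support computation of the second paragraph: it is the only part that cannot be quoted directly from Section~\ref{sect-3}, since the underlying torus is now $\Omega_h$ rather than $\Omega$. The substantive point to check is that deleting the finitely many circle coordinates indexed by $\pp_h$ leaves the denseness arguments intact --- which it does precisely because infinitely many primes remain --- so that the modified Euler product $\varphi_h$ still produces the full set $S_{\varphi_h}$ of admissible limit functions. Everything else is mechanical.
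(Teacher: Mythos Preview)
Your proposal is correct and follows precisely the route the paper itself indicates: the paper does not give a detailed proof of Theorem~\ref{thm-5-2} but simply remarks that $\varphi_h$ differs from $\varphi$ only by finitely many Euler factors, so the method of the preceding sections (Theorems~\ref{rk-th-3} and~\ref{rk-th-4} and the proof of Theorem~\ref{rk-th-2}) applies verbatim with Lemma~\ref{lem-5-1} replacing Lemma~\ref{rk-lem-1}. You have filled in exactly these details --- in particular the verification that $\varphi_h\in\st$ (notably condition~(v)) and the observation that the support argument survives over $\Omega_h$ because $\pp\setminus\pp_h$ remains infinite --- which the paper leaves implicit.
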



\end{document}